\newtheorem{theorem}{Theorem}[section]
\newtheorem{corollary}{Corollary}[section]
\newtheorem{remark}{Remark}[section]
\newtheorem{lemma}[theorem]{Lemma}
\newtheorem{proposition}[theorem]{Proposition}
\newcommand{\lm}{\lambda}
\renewcommand{\div}{{\rm div }}
\newcommand{\p}{\partial}
\renewcommand{\r}{\mathbb{R}}
\newcommand{\curl}{{\rm curl} }
\newcommand{\te}{\theta}
\newcommand{\al}{\alpha}
\newcommand{\ve}{\varepsilon}
\newcommand{\bn}{\begin{eqnarray}}
	\newcommand{\en}{\end{eqnarray}}
\newcommand{\bnn}{\begin{eqnarray*}}
	\newcommand{\enn}{\end{eqnarray*}}
\newcommand{\bt}{\begin{theorem}}
\newcommand{\et}{\end{theorem}}
\newcommand{\bl}{\begin{lemma}}
\newcommand{\el}{\end{lemma}}
\newcommand{\vp}{\varphi}
\newcommand{\nb}{\nabla}
\newcommand{\uu}{\mathbf{u}}
\newcommand{\U}{\mathbf{U}}
\newcommand{\B}{\mathbf{B}}
\newcommand{\e}{\mathbf{e}}
\newcommand{\ww}{\mathbf{w}}
\newcommand{\ee}{\boldsymbol{\eta}}
\newcommand{\zt}{\boldsymbol{\zeta}}
\newcommand{\vv}{\mathbf{v}}
\newcommand{\bT}{\mathbf{T}}
\newcommand{\bM}{\mathbf{M}}
\newcommand{\Bn}{{\boldsymbol{n}}}
\begin{document}

\title[Point Singularities to stationary MHD Equations]{Point Singularities of Solutions to the Stationary Incompressible MHD Equations}

\author{Shaoheng Zhang} 
\address{School of Mathematical Sciences, Soochow University, Suzhou, 215006, China}
\email{20234007008@stu.suda.edu.cn}

\author{Kui Wang}
\address{School of Mathematical Sciences, Soochow University, Suzhou, 215006, China}
\email{kuiwang@suda.edu.cn}

\author{Yun Wang}
\address{School of Mathematical Sciences, Center for dynamical systems and differential equations, Soochow University, Suzhou, 215006, China}
\email{ywang3@suda.edu.cn}

\subjclass[2020]{76W05, 35Q35.}
\keywords{Magnetohydrodynamics equations, point singularity, Landau solution.}

\begin{abstract}
We investigate the point singularity of very weak solutions $(\uu,\B)$ to the stationary MHD equations.
More precisely, assume that the solution $(\uu,\B)$ in the punctured ball $B_2\setminus \{0\}$ satisfies the vanishing condition \eqref{vanishing}, and that $|\uu(x)|\le \varepsilon |x|^{-1},\ |\B(x)|\le C |x|^{-1}$ with small $\varepsilon>0$ and general $C>0$. Then, the leading order term of $\uu$ is a Landau solution, while the $(-1)$ order term of $\B$ is $0$. In particular, for axisymmetric solutions $(\uu, \B)$, the condition \eqref{vanishing} holds provided $\B = B^\theta(r,z) \e_\theta$ or the boundary condition \eqref{boundarycondition} is imposed.
\end{abstract}

\maketitle

\section{Introduction}
In this article, we investigate the point singularity of very weak  solutions to the stationary incompressible magnetohydrodynamics (MHD) equations 
\begin{align}
\label{MHD}
    \begin{cases}
    -\Delta \uu+(\uu \cdot \nb)\uu-(\B \cdot \nb)\B+\nb p =0,\\
    -\Delta \B + (\uu \cdot \nb) \B -(\B \cdot \nb) \uu=0,\\
    \div \ \uu= \div \ \B=0,
    \end{cases}
\end{align} 
in $B_2\setminus \{0\} \subseteq \r^3$. 
Here $\uu$ is the velocity field of the fluid, $\B$ is the magnetic field and $p$ is the scalar pressure. The MHD equations describe the motion of electrically conducting fluids, such as plasmas, and are a coupled system of the Navier-Stokes equations and Maxwell equations. 

The MHD equations \eqref{MHD} reduce to the stationary incompressible Navier-Stokes equations when $\B$ vanishes. There are many studies focused on the singularity  or regularity of solutions to the Navier-Stokes equations.  
Dyer and Edmunds\cite{DE70} showed that if $\uu,p \in L^{3+\varepsilon}(B_R)$ for some $\varepsilon>0$, then $(\uu, p)$ can be defined at $0$ so that it is a smooth solution in the whole ball $B_R$. Shapiro \cite{Sha74,Sha76} also showed that the singularity is removable by assuming $\uu \in L^{3+\varepsilon}(B_R)$ and $\uu(x)=o(|x|^{-1})$. 
Later, Choe and Kim \cite{CK00} established the same result by assuming $\uu\in L^{3}(B_2)$ or $\uu(x)=o(|x|^{-1})$ as $x \to 0$, and $\uu$ is smooth in $B_R$ if $\uu \in L^{3+\varepsilon}(B_R)$.
Subsequently, Kim and Kozono \cite{KK06} proved that $(\uu,p)$ can be defined at $0$ so that it is a $C^{\infty}$-solution provided $\uu\in L^{3}(B_2)$ or $\uu(x)=o(|x|^{-1})$ as $x \to 0$. 
On the other hand, the $|x|^{-1}$-singularity cannot be removed in general, as there exist a family of $(-1)$-homogeneous solutions. 
These solutions, first calculated by Landau, are now called Landau solutions. 
The explicit formula of Landau solutions can be found in many textbooks, see, e.g., \cite[Section 8.2]{Tsai18}. 
As revealed by \cite{MT12}, the Landau solutions play an important role in describing the local behavior of solutions near a singular point. More precisely, if we assume that $|\uu(x)|\leq \frac{\varepsilon}{|x|} $, then the leading behavior of $\uu$ around $0$ is given by a Landau solution if $\varepsilon$ is sufficiently small.

Regarding the MHD equations, Weng \cite{Weng16} utilized Bernoulli's law to establish the existence of axisymmetric weak solutions to steady incompressible MHD equations with nonhomogeneous boundary conditions. 
Following the same argument of Kim-Kozono \cite{KK06}, one can show that the solution $(\uu, \B)$ to \eqref{MHD} in $B_2 \setminus \{0\}$ is smooth in $B_2$, if $(\uu, \B)\in L^3(B_2)$ or $|\uu(x)| +|\B(x)| = o(|x|^{-1})$. 
Inspired by Miura-Tsai\cite{MT12}, we study the leading term or singularity of solutions to the stationary incompressible MHD equations, as $x \to 0$.  In particular, we consider the solution $(\uu, \B)$ that satisfies 
\begin{align}
\label{est ub}
    |\uu(x)|\le \frac{C^*_1}{|x|}, \quad 
    |\B(x)|\le \frac{C^*_2}{|x|},\ \ \ \ 0<|x|<2.
\end{align}

As mentioned above, if we let $\uu$ be a Landau solution and $\B$ be zero, then $(\uu, \B)$ is a solution to \eqref{MHD}, which satisfies \eqref{est ub}. We will show that the special solution describes the leading behavior of a class of solutions near the origin.

To state our main result, let us introduce the definitions of very weak solutions. 
Let $\uu=(u^1,u^2,u^3)$ and $\B=(B^1,B^2,B^3)$ denote the velocity and magnetic fields, respectively.
A very weak solution $(\uu,\B)$ defined on $\Omega\subseteq \r^3$ consists of vector fields $\uu,\B\in L^2_{\mathrm{loc}}(\Omega)$ that satisfy the MHD equations in the distributional sense:
\begin{align*}
    \int_{\Omega}
    -\uu \cdot \Delta \zt - u^j u^i \p_j \zeta^i 
    + B^j B^i \p_j \zeta^i 
    = 0, \quad
    \int_{\Omega}
    -\B \cdot \Delta \zt - u^j B^i \p_j \zeta^i 
    + B^j u^i \p_j \zeta^i 
    =0
\end{align*}
for any $\zt=(\zeta^1,\zeta^2,\zeta^3) \in C^{\infty}_{c,\sigma}(\Omega)$, and $\int_{\Omega} \uu \cdot \nb \phi=\int_{\Omega} \B \cdot \nb \phi=0$ for any $\phi \in C^{\infty}_c(\Omega)$.

If $(\uu,\B,p)$ is the solution to the MHD equations, denote
\begin{equation}
\begin{aligned}
    \label{def T}
    \bT_1(\uu,\B,p)_{ij}&:=-\p_i u^j-\p_j u^i+u^i u^j - B^i B^j +p \delta_{ij},\\
    \bT_2(\uu,\B)_{ij}&:=-\p_j B^i + u^j B^i -B^j u^i.
\end{aligned}
\end{equation}
The main result is concerned with point singularity of  very weak solutions.

\begin{theorem}
\label{thm1}
Let $(\uu,\B)$ be a very weak solution of \eqref{MHD} in $B_2\setminus \{0\}$ satisfying 
\begin{equation}\label{vanishing}
\int_{|x|=1} \bT_2 (\uu, \B)_{ij} n_j(x)  = 0, \quad i=1,2,3.
\end{equation}
For any $q\in (1,3)$, there exists a small constant $\ve=\ve(q)>0$. 
If $(\uu,\B)$ satisfies \eqref{est ub} in $B_2\setminus\{0\}$ with $C_1^* \le \ve$, then there exists a scalar function $p$, unique up to a constant, such that $(\uu,\B,p)$ satisfies \eqref{MHD_dis} in the distributional sense with the estimates 
\begin{align}
\label{tineq1}
    \|\uu-\U^b\|_{W^{1,q}(B_1)} + \sup_{x\in B_1} |x|^{\frac{3}{q}-1}|(\uu-\U^b)(x)| \le \mathcal{C}(C_1^*, C_2^*),
\end{align}
and 
\begin{align}
\label{tineq2}
    \| \B \|_{W^{1,q}(B_1)} + \sup_{x\in B_1} |x|^{\frac{3}{q}-1}|\B(x)| \le CC_2^*.
\end{align}
Here, $\U^b$ is the Landau solution defined in Section \ref{Sect2.2} with $b_i=\int_{|x|=1} (\bT_1)_{ij}n_j(x)$, and $\bT_1$, $\bT_2$ are defined in \eqref{def T}. The positive constant $C$ is a uniform constant and the positive constant $\mathcal{C}(C_1^*, C_2^*)$ depends only on $C_1^*$ and $C_2^*$.
\end{theorem}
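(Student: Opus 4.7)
The plan is to adapt the Miura--Tsai strategy \cite{MT12} for the Navier--Stokes singularity problem to the coupled MHD system. First, using interior regularity of MHD solutions away from the origin together with the pointwise bounds \eqref{est ub}, I would reconstruct the pressure $p$: applying the divergence to the momentum equation gives $-\Delta p=\partial_i\partial_j(u^iu^j-B^iB^j)$ in $B_2\setminus\{0\}$, and standard elliptic theory defines $p$ uniquely up to a constant. This verifies the distributional form of \eqref{MHD} and makes meaningful the flux quantities $b_i:=\int_{|x|=r}(\bT_1)_{ij}n_j$ and $b'_i:=\int_{|x|=r}(\bT_2)_{ij}n_j$, both independent of $r\in(0,2)$ because $\partial_j\bT_1=0$ and $\partial_j\bT_2=0$ hold in the punctured ball.

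The second step addresses $\B$ and is where the vanishing condition \eqref{vanishing} plays its role. Writing the induction equation as $-\Delta\B=(\B\cdot\nabla)\uu-(\uu\cdot\nabla)\B$ and inverting via the Newtonian potential, $\B$ decomposes as a volume integral of the quadratic interaction (small thanks to $C_1^*\le\varepsilon$ and controllable in scale-invariant weighted spaces) plus a harmonic residual in $B_2\setminus\{0\}$. The bound $|\B|\le C_2^*/|x|$ restricts the singularity of the residual to order at most $|x|^{-1}$, whose coefficient is, up to a universal constant, the flux $b'$. By \eqref{vanishing} this flux vanishes, so the $|x|^{-1}$ leading term of $\B$ is absent. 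A Picard iteration for $\B$ in a weighted space, linear in $\B$ once $\uu$ is treated as a small coefficient, then yields \eqref{tineq2}, with the bound linear in $C_2^*$ and no smallness of $C_2^*$ required.

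For the velocity, let $\U^b$ denote the Landau solution with flux $b$. The difference $\ww:=\uu-\U^b$ solves a perturbed Stokes system whose right-hand side comprises the quadratic interactions of $\ww$, $\U^b$ and $\uu$ (each of size $O(\varepsilon/|x|)$) together with the Lorentz forcing $\partial_j(B^iB^j)$ estimated by \eqref{tineq2}. The smallness of $C_1^*$ makes the self-interaction a contraction in a scale-invariant norm, while the magnetic contribution is subcritical relative to Landau scaling precisely because $\B$ has no $|x|^{-1}$-homogeneous part. A Picard iteration in the appropriate weighted $W^{1,q}$-type space then produces $\ww$ with the decay and integrability in \eqref{tineq1}. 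The main obstacle, and the genuinely MHD step, will be to quantify how \eqref{vanishing} upgrades the pointwise bound on $\B$ into the integrability gain needed to close both iterations simultaneously and prevent the $\uu$--$\B$ coupling from destroying the Landau asymptotics of $\uu$.
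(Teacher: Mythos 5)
Your overall architecture (flux identification, killing the $|x|^{-1}$ mode of $\B$ via \eqref{vanishing}, then perturbing off a Landau solution) matches the paper's, and your treatment of $\B$ by Newtonian-potential decomposition plus a B\^ocher-type classification of the harmonic residual is a legitimate alternative to the paper's route (the paper instead cuts off, solves $-\Delta \ww + \nb\cdot(\uu\otimes\ww-\ww\otimes\uu)=\mathbf{f}$ by a contraction in $W^{1,q}_0(B_2)$ using the bilinear Lorentz-space estimate \eqref{ineq1}, and then matches the constructed solution with $\vp\B$ through a separate uniqueness lemma in $W^{1,r}_0$, $r<\tfrac32$). Note that your Picard/Neumann argument also needs such an identification step wherever it ``produces'' a solution rather than estimating the given one; the paper's Lemmas \ref{lmUn} and \ref{lm u1}(ii) exist precisely for this.

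The genuine gap is in the velocity step, and it is not merely the loose end you flag at the close. You take for granted that $\U^b$ is of size $O(\ve/|x|)$, but $b_i=\int_{|x|=1}(\bT_1)_{ij}n_j$ contains the contributions $\int u^iu^jn_j-\int B^iB^jn_j+\int p\,n_i$, and on $|x|=1$ the magnetic and pressure terms are controlled only by $C_2^*$, which is allowed to be large. Hence a priori $|b|$ is large, $|\U^b|\lesssim |b||x|^{-1}$ fails (that estimate in Section \ref{Sect2.2} is only valid for small $|b|$), the coefficients $(\U^b\cdot\nb)\ww+(\ww\cdot\nb)\U^b$ in your perturbed Stokes system are not small, and the contraction does not close. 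The paper resolves this in Proposition \ref{prop3.9}: the improved decay $|\B(x)|\le CC_2^*|x|^{-\al}$ with $\al<1$ from Proposition \ref{lm pt} yields a radius $\lm=\lm(C_1^*,C_2^*)$ with $|\B(x)|\le C_1^*|x|^{-1}$ on $B_{2\lm}$; rescaling to $B_2$ puts one in the regime $C_1^*=C_2^*\le\ve$, where the bootstrap gives $|\nb\uu|+|p|\le CC_1^*|x|^{-2}$ near the origin and therefore $|b|\le CC_1^*$ (computing the flux on $|x|=\lm$ rather than $|x|=1$), so the small-data result applies in $B_\lm$; the annulus $B_1\setminus B_\lm$ is then handled by the crude bounds \eqref{est ub} and \eqref{eqn1}, which is also why the final constant in \eqref{tineq1} is $\mathcal{C}(C_1^*,C_2^*)$ rather than $CC_1^*$. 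Without this zooming step (or an equivalent argument showing $|b|\le CC_1^*$ and localizing the contraction), your third paragraph does not yield \eqref{tineq1} for general $C_2^*$.
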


The assumption \eqref{vanishing} is crucial in Theorem \ref{thm1}. It seems a little bit technical; however, we will list two families of solutions which satisfy the assumption \eqref{vanishing}. Before that, let us introduce the definition of axisymmetric vector fields. Recall the cylindrical coordinates  $(r,\te,z)$, which are defined as follows:
\begin{equation*}
  x= (x_1, x_2, x_3) =(r\cos \te,r \sin \te, z).  
\end{equation*} 
In this coordinate system, a vector field $\vv$ can be expressed as
\begin{equation*}
    \vv = v^r(r,\te,z) \e_r + v^\theta(r,\te,z) \e_\theta + v^z(r,\te,z) \e_z, 
\end{equation*}
with orthonormal basis vectors
\begin{equation*}
    \begin{aligned}
    \e_r=(\cos \te, \sin \te, 0),\ \ 
    \e_{\te}=(-\sin \te, \cos \te, 0),\ \ 
    \e_z=(0,0,1).
    \end{aligned}
\end{equation*}
A vector field $\vv$ is called axisymmetric if it is of the form
\[
\vv=v^r(r,z)\e_r+v^{\te}(r,z)\e_{\te}+v^z(r,z)\e_z.
\]

In the first corollary, we consider a family of axisymmetric solutions $(\uu,\B)$ to the MHD system \eqref{MHD} with $\B=B^{\te}(r,z)\e_{\te}$.

\begin{corollary}\label{cor1}
Let $(\uu,\B)$ be a very weak solution of \eqref{MHD} in $B_2\setminus \{0\}$. Assume further that $\uu, \B$ are axisymmetric and $\B=B^{\te}(r,z)\e_{\te}$.
For any $q\in (1,3)$, there exists a small constant $\ve=\ve(q)>0$, such that if $(\uu,\B)$ satisfies \eqref{est ub} in $B_2\setminus\{0\}$ with $C_1^* \le \ve$, then the conclusions of Theorem \ref{thm1} hold.

\end{corollary}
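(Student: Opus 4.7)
The strategy reduces Corollary \ref{cor1} entirely to verifying the vanishing condition \eqref{vanishing}, since once it holds Theorem \ref{thm1} applies verbatim. The pointwise bound \eqref{est ub} together with standard elliptic bootstrap for the MHD system on any compact subset of $B_2 \setminus \{0\}$ ensures that $(\uu, \B)$ is smooth away from the origin, so $\bT_2(\uu, \B)$ has a classical trace on the unit sphere and the integral in \eqref{vanishing} is classically defined.

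For the components $i = 1, 2$ I would argue by rotational invariance. The tensor $\bT_2(\uu, \B)_{ij} = -\partial_j B^i + u^j B^i - B^j u^i$ is built algebraically from $\uu$, $\B$ and $\nabla \B$, and is manifestly rotationally covariant. Since both $\uu$ and $\B = B^\theta(r,z)\e_\theta$ are axisymmetric, $\bT_2(\uu, \B)$ is invariant under every rotation $R$ about the $z$-axis. As $\Bn = \Bx$ on the unit sphere is equivariant under such $R$, the vector field $\bT_2(\uu,\B)\Bn$ on $S^2$ is itself axisymmetric, and therefore
\begin{align*}
\int_{|x|=1} \bT_2(\uu,\B)\Bn \, dS = R \int_{|x|=1} \bT_2(\uu,\B)\Bn \, dS
\end{align*}
for every such $R$. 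The only vectors fixed by all rotations about the $z$-axis are multiples of $\e_z$, forcing the $i = 1$ and $i = 2$ components to vanish.

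For $i = 3$ I would compute the integrand pointwise. From $\B = B^\theta(r,z)\e_\theta$ one reads off $B^1 = -B^\theta \sin\theta$, $B^2 = B^\theta \cos\theta$, and crucially $B^3 \equiv 0$; moreover $\e_\theta \cdot \Bx = 0$ everywhere, so $\B \cdot \Bn = 0$ on the unit sphere. Hence
\begin{align*}
\bT_2(\uu, \B)_{3j} n_j &= -\partial_j B^3 \, n_j + u^j B^3 n_j - u^3 B^j n_j \\
&= -u^3 (\B \cdot \Bn) = 0
\end{align*}
identically on $S^2$, and the $i = 3$ integral vanishes trivially. With \eqref{vanishing} verified for all three indices, Theorem \ref{thm1} applies and yields \eqref{tineq1}--\eqref{tineq2}. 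There is no substantive obstacle here; an alternative to the covariance argument for $i = 1, 2$ is to expand in cylindrical coordinates and use $\int_0^{2\pi} \sin\theta \, d\theta = \int_0^{2\pi} \cos\theta \, d\theta = 0$, but the symmetry argument is cleaner. The real content of the corollary is the observation that the specific azimuthal structure of $\B$ forces both $B^3$ and $\B \cdot \Bn$ to vanish automatically.
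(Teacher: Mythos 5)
Your proof is correct and follows essentially the same route as the paper: reduce the corollary to verifying the vanishing condition \eqref{vanishing} for the smooth (by interior regularity) solution on the unit sphere, and then invoke Theorem \ref{thm1}. The only cosmetic difference is that for $i=1,2$ you replace the paper's explicit computation (which reduces to $\int_0^{2\pi}\sin\te\,\mathrm{d}\te=\int_0^{2\pi}\cos\te\,\mathrm{d}\te=0$) by the equivalent rotational-equivariance argument, while the $i=3$ case is handled identically via $B^3\equiv 0$ and $\B\cdot\Bn=0$ on the sphere.
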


Besides, the assumption $\B=B^{\te}(r,z)\e_{\te}$ in Corollary \ref{cor1} can be replaced by the homogeneous boundary conditions \eqref{boundarycondition}, thereby we obtain

\begin{corollary}\label{cor2}
Let $(\uu,\B)$ be a very weak solution of \eqref{MHD} in $B_2\setminus \{0\}$. Furthermore, assume that $\uu, \B$ are axisymmetric and $(\uu,\B)$ satisfies the following  boundary conditions
\begin{align}\label{boundarycondition}
\uu=0, \quad \B \cdot \Bn=0, \quad \curl\ \B\times\Bn=0, \quad \text{on }\ \p B_2.
\end{align}
For any $q\in (1,3)$, there exists a small constant $\ve=\ve(q)>0$, such that if $(\uu,\B)$ satisfies \eqref{est ub} in $B_2\setminus\{0\}$ with $C_1^* \le \ve$, then the conclusions of Theorem \ref{thm1} hold. 

\end{corollary}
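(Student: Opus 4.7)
The plan is to verify the vanishing condition \eqref{vanishing} under the hypotheses of Corollary \ref{cor2}, after which Theorem \ref{thm1} applies directly and yields the conclusion. First, since $\div \uu = \div \B = 0$, the second equation of \eqref{MHD} rewrites as $\partial_j \bT_2(\uu, \B)_{ij} = 0$ pointwise in $B_2 \setminus \{0\}$. Applying the divergence theorem on the shell $\{r < |x| < 2\}$ and using $\uu|_{\partial B_2} = 0$ gives, for every $r \in (0, 2)$,
\[
\int_{|x|=r} \bT_2(\uu, \B)_{ij} n_j \, dS \;=\; -\int_{\partial B_2} \partial_n B^i \, dS, \qquad i = 1, 2, 3,
\]
so it suffices to show that the right-hand side vanishes for each $i$.

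For $i = 1, 2$, axisymmetry alone suffices: writing $B^1 = B^r(r, z)\cos\theta - B^\theta(r, z)\sin\theta$ in cylindrical coordinates, the $\theta$-integration over $\partial B_2$ annihilates both $\cos\theta$ and $\sin\theta$, and $B^2$ is analogous.

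The substantive case is $i = 3$. I would introduce the Stokes stream function $\Psi(r, z)$ for the poloidal part of $\B$, with $B^r = \Psi_z / r$ and $B^z = -\Psi_r / r$, and translate the data \eqref{boundarycondition} into conditions on $\Psi$: from $\B \cdot \Bn = 0$ the tangential derivative of $\Psi$ along the meridian vanishes, and combined with the interior regularity of $\B$ on the axis $\{r = 0\} \setminus \{0\}$ (which forces $\Psi|_{r=0} = 0$), this gives $\Psi \equiv 0$ on $\partial B_2$. The $\e_r$ and $\e_z$ components of $\curl \B \times \Bn = 0$ then translate to $L\Psi := \Psi_{rr} - \Psi_r / r + \Psi_{zz} = 0$ on $\partial B_2$. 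Using $\Psi = L\Psi = 0$ on $\partial B_2$ together with $B^z = -\Psi_r / r$, a short computation reduces $\partial_n B^z$ on the sphere to
\[
\partial_n B^z \;=\; \frac{1}{r}\bigl(\sin\phi\,\Psi_{zz} - \cos\phi\,\Psi_{rz}\bigr).
\]
Parameterizing the meridian by $\phi \in [0, \pi]$ through $(r, z) = (2\sin\phi, 2\cos\phi)$, the chain rule identifies the bracket with $-\tfrac{1}{2}\frac{d}{d\phi}\Psi_z$, so the surface integral telescopes:
\[
\int_{\partial B_2} \partial_n B^z \, dS \;=\; -2\pi\bigl[\Psi_z(0, -2) - \Psi_z(0, 2)\bigr].
\]
Both endpoint contributions vanish because $\Psi_z = r B^r$ and $r = 0$ at the poles, completing the verification of \eqref{vanishing}.

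The main obstacle is the collapse of $\partial_n B^z$ into a total $\phi$-derivative of $\Psi_z$; this relies essentially on the simultaneous use of both poloidal boundary identities $\Psi = 0$ and $L\Psi = 0$ on $\partial B_2$, and the vanishing of the endpoint contributions further depends on the standard regularity of axisymmetric solutions at the polar axis away from the origin.
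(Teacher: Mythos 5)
Your proposal is correct, and it shares the paper's overall skeleton: verify the vanishing condition \eqref{vanishing}, reduce it by the divergence theorem to $-\int_{\partial B_2}\partial B^i/\partial\Bn$ (the terms $(\uu\cdot\Bn)B^i$ and $(\B\cdot\Bn)u^i$ dying because $\uu=0$ there), and dispose of $i=1,2$ by the $\te$-integration. For the substantive case $i=3$, however, your route is genuinely different. The paper stays in spherical components: from the $\e_{\te}$-part of $\curl\,\B\times\Bn=0$ together with $B^\rho=\B\cdot\Bn=0$ it derives $\partial_\rho B^{\vp}=-\tfrac12 B^{\vp}$ on $\partial B_2$, uses the divergence identity \eqref{sphericaldiv} to express $\partial_\rho B^\rho$ through $B^{\vp}$, and then checks that the resulting $\vp$-integral vanishes by an integration by parts. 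You instead encode $\div\,\B=0$ in a Stokes stream function $\Psi$ and read off $L\Psi:=\Psi_{rr}-\Psi_r/r+\Psi_{zz}=0$ on $\partial B_2$ from the poloidal components of $\curl\,\B\times\Bn=0$; this collapses $\partial_n B^z$ (after multiplication by the surface density $\propto\sin\vp$, which cancels the $1/r$) into the exact derivative $-\frac{d}{d\vp}\Psi_z$ along the meridian, so the integral telescopes to the poles, where $\Psi_z=rB^r=0$. Your version makes the cancellation structural rather than computational, and it in fact uses slightly less than the paper does: the condition $\B\cdot\Bn=0$ (your $\Psi\equiv 0$ on $\partial B_2$) never actually enters your final calculation for $i=3$, which needs only $\uu=0$, the poloidal part of $\curl\,\B\times\Bn=0$, and $\div\,\B=0$. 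One point to state more carefully: the claim $\Psi|_{r=0}=0$ is delicate on the full punctured ball, since the axis meets $B_2\setminus\{0\}$ in two components and equality of the two axis constants requires the flux of $\B$ through small spheres to vanish (which does follow from $|\B|\le C_2^*|x|^{-1}$); but since you only need $\Psi$ on a neighbourhood of $\partial B_2$, whose meridian section is simply connected, this subtlety can be sidestepped entirely.
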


Furthermore, as an immediate consequence of Theorem \ref {thm1}, we obtain a Liouville-type result, which is presented as the following corollary.
\begin{corollary}
\label{cor3}
Let $(\uu,\B)$ be a very weak solution of \eqref{MHD} in $\r^3\setminus \{0\}$ satisfying \eqref{vanishing}.
If $(\uu,\B)$ satisfies \eqref{est ub} in $\r^3\setminus\{0\}$ and the constant $C_1^*>0$ is sufficiently small, then $\uu$ must be a Landau solution, and $\B \equiv 0$.
\end{corollary}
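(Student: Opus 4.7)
The plan is to combine Theorem \ref{thm1} with the natural scaling symmetry of \eqref{MHD} to promote the local decomposition near the origin into a global statement on $\r^3\setminus\{0\}$. Fix once and for all some $q\in(3/2,3)$, so that $\al:=3/q-1>0$ and $2-3/q>0$.

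First I would apply Theorem \ref{thm1} directly to $(\uu,\B)$ in $B_2\setminus\{0\}$, obtaining a Landau parameter $b$ with $b_i=\int_{|x|=1}(\bT_1)_{ij}n_j$ and the pointwise bounds
\begin{equation*}
|x|^{\al}|\uu(x)-\U^b(x)|\le \mathcal{C}(C_1^*,C_2^*),\qquad |x|^{\al}|\B(x)|\le CC_2^*,\qquad x\in B_1\setminus\{0\}.
\end{equation*}

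Next I would exploit the scaling symmetry. For each $\lambda>0$, set $\uu_\lambda(x):=\lambda\uu(\lambda x)$ and $\B_\lambda(x):=\lambda\B(\lambda x)$, which again solves \eqref{MHD} on $\r^3\setminus\{0\}$ and satisfies \eqref{est ub} with the same constants $C_1^*,C_2^*$. Using the scaling identity $\bT_2(\uu_\lambda,\B_\lambda)_{ij}(x)=\lambda^2\bT_2(\uu,\B)_{ij}(\lambda x)$ together with $\p_j(\bT_2)_{ij}=0$ (the distributional form of the $\B$-equation in \eqref{MHD}), a change of variables shows
\begin{equation*}
\int_{|x|=1}\bT_2(\uu_\lambda,\B_\lambda)_{ij}n_j=\int_{|y|=\lambda}\bT_2(\uu,\B)_{ij}n_j=\int_{|y|=1}\bT_2(\uu,\B)_{ij}n_j=0,
\end{equation*}
so that $(\uu_\lambda,\B_\lambda)$ again satisfies \eqref{vanishing}. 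The analogous computation with $\bT_1$ yields that the Landau parameter of $(\uu_\lambda,\B_\lambda)$ is the same $b$. Therefore Theorem \ref{thm1} applies to $(\uu_\lambda,\B_\lambda)$ with the same $\U^b$.

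Finally I would compare the two estimates. Since $\U^b$ is $(-1)$-homogeneous, the remainder $\vv_\lambda:=\uu_\lambda-\U^b$ satisfies $\vv_\lambda(x)=\lambda\,\vv(\lambda x)$ with $\vv:=\uu-\U^b$. The bound $|x|^\al|\vv_\lambda(x)|\le \mathcal{C}(C_1^*,C_2^*)$ in $B_1$, rewritten in $y=\lambda x$, becomes
\begin{equation*}
|y|^{\al}|\vv(y)|\le \mathcal{C}(C_1^*,C_2^*)\,\lambda^{-(2-3/q)},\qquad y\in B_\lambda.
\end{equation*}
Since $2-3/q>0$, letting $\lambda\to\infty$ at each fixed $y$ forces $\vv\equiv 0$ in $\r^3\setminus\{0\}$, so $\uu=\U^b$; the identical scaling argument applied to the bound on $\B$ yields $\B\equiv 0$. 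The only delicate point to verify is the scale invariance of the hypothesis \eqref{vanishing} and of the Landau parameter $b$, both of which boil down to the fact that rows of $\bT_1$ and $\bT_2$ are divergence-free along solutions of \eqref{MHD}.
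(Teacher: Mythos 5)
Your proposal is correct and follows essentially the same route as the paper: rescale $(\uu_\lambda,\B_\lambda)=(\lambda\uu(\lambda\cdot),\lambda\B(\lambda\cdot))$, apply Theorem \ref{thm1} uniformly in $\lambda$ (the paper fixes $q=2$ where you keep a general $q\in(\tfrac32,3)$, which changes nothing), and let $\lambda\to\infty$ in the pointwise bound. The only difference is that you spell out the scale-invariance of \eqref{vanishing} and of the Landau parameter $b$ via the divergence-free rows of $\bT_1,\bT_2$, which the paper leaves as ``straightforward to check''; this is a welcome but not essential elaboration.
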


\begin{remark}
Note that the smallness of $C_2^*>0$ is not required in either Theorem \ref{thm1} or Corollaries \ref{cor1}-\ref{cor3}.
\end{remark}

\begin{remark}
As shown in Section 4, the vanishing condition \eqref{vanishing} holds provided that $(\uu, \B)$ are axisymmetric and  $\B=B^{\te}(r,z)\e_{\te}$. 
Consequently, Corollary \ref{cor3} remains valid under the same hypotheses as Corollary \ref{cor1}.
\end{remark}


The paper is organized as follows. In Section 2, we introduce some notation and present some preliminary lemmas that will be used. Section 3 is devoted to the proof of Theorem \ref{thm1}. Corollaries \ref{cor1}-\ref{cor2} and Corollary \ref{cor3} are proved in Section 4 and Section 5, respectively.
\section{Preliminaries}

\subsection{Inequalities in Lorentz spaces}

The Lorentz spaces are denoted by $L^{q,r}(\Omega)$ with $1<q<\infty$ and $1\le r \le \infty$. For the definitions of Lorentz spaces,  we refer the readers to \cite[Section 1.4]{Gra14}.  Note that $L^{q,q}=L^q$ and $L^q_{wk}=L^{q,\infty}$.

\begin{lemma}
\label{lemma1}
Let $B_2\subset \r^n$ with $n \ge 2$.
\begin{enumerate}[(i)]
    \item Let $1<q_1, q_2< \infty$ with $1/q:=1/q_1 +1/q_2 <1$ and let $1\le r_1, r_2\le \infty$. For $f \in L^{q_1,r_1}(B_2)$ and $g \in L^{q_2,r_2}(B_2)$, we have
    \begin{align*}
        \|fg\|_{L^{q,r}(B_2)} \le C\|f\|_{L^{q_1,r_1}(B_2)}
        \|g\|_{L^{q_2,r_2}(B_2)} \quad \text{for} \ r:=\min\{r_1, r_2 \},
    \end{align*}
    where $C=C(q_1,r_1,q_2,r_2)$.
    \item Let $1<r<n$. For $f\in W^{1,r}(B_2)$, we have
    \begin{align*}
        \|f\|_{L^{\frac{nr}{n-r},r}(B_2)} \le C\| f \|_{W^{1,r}(B_2)},
    \end{align*}
    where $C=C(n,r)$.
    \item When $n=3$ and $1<r<3$, for any $f\in L^3_{wk}(B_2)$ and $g \in W^{1,r}(B_2)$, we have
    \begin{align}
    \label{ineq1}
    \|fg\|_{L^r(B_2)} \le C_r \|f\|_{L^3_{wk}(B_2)} \|g\|_{W^{1,r}(B_2)}.
    \end{align}
    \end{enumerate}
\end{lemma}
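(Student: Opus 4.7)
The plan is to treat the three parts separately, with (iii) derived as a direct consequence of (i) combined with (ii), so the main work is really in recalling the correct references/arguments for the first two statements.

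For part (i), my approach would be to invoke the generalized Hölder inequality in Lorentz spaces, originally due to O'Neil. One can prove it from scratch by starting from the definition $\|f\|_{L^{q,r}} \sim \bigl(\int_0^\infty (t^{1/q}f^{\ast}(t))^r \,\frac{dt}{t}\bigr)^{1/r}$ (with the usual $L^\infty$ modification) and using the O'Neil inequality $(fg)^{\ast}(t) \le f^{\ast}(t/2)g^{\ast}(t/2)$ together with Hardy-type inequalities on $(0,\infty)$. Since we only need the inequality on the bounded domain $B_2$, no truncation issues arise. Because this is standard, I would simply cite it from \cite{Gra14} rather than reproducing the proof.

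For part (ii), this is the Sobolev--Lorentz refinement of the classical Sobolev embedding. The key point is that when $1<r<n$, the embedding $W^{1,r}(\r^n) \hookrightarrow L^{\frac{nr}{n-r},r}(\r^n)$ holds — strictly stronger than the usual $L^{nr/(n-r)}$ embedding since the second Lorentz index $r$ is smaller than $nr/(n-r)$. One way to obtain it is via a sharp estimate on the Riesz potential $I_1$, namely $I_1 : L^{r,r} \to L^{\frac{nr}{n-r},r}$, combined with the representation $|f(x)| \lesssim I_1(|\nabla f|)(x)$ for compactly supported $f$. For a function on the ball $B_2$ one extends via a Stein extension operator to obtain the desired estimate on $B_2$. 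Again this is a classical result (Peetre, O'Neil, Tartar), and I would simply quote it.

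Part (iii) is then a straightforward combination. Identifying $L^3_{wk}(B_2) = L^{3,\infty}(B_2)$ and applying (ii) with $n=3$, we have $g \in L^{\frac{3r}{3-r},r}(B_2)$ with norm controlled by $\|g\|_{W^{1,r}(B_2)}$. Now apply (i) with $q_1=3$, $r_1=\infty$, $q_2=\frac{3r}{3-r}$, $r_2=r$. A quick check gives
\[
\frac{1}{q}=\frac{1}{q_1}+\frac{1}{q_2}=\frac13+\frac{3-r}{3r}=\frac{1}{r}<1,
\]
while $\min\{r_1,r_2\}=r$. Thus $\|fg\|_{L^{r,r}(B_2)} = \|fg\|_{L^r(B_2)} \le C_r \|f\|_{L^3_{wk}(B_2)} \|g\|_{W^{1,r}(B_2)}$, which is exactly \eqref{ineq1}.

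The only real obstacle is being careful with index arithmetic in (iii) and with the precise hypothesis $1/q_1+1/q_2<1$ in (i): since $r<3$, we have $1/r = 1/3 + (3-r)/(3r) < 1$ iff $r>1$, so the condition $r>1$ in the statement is exactly what is needed to apply (i). No additional smallness or geometric argument is needed beyond what is already provided by the standard Lorentz-space machinery.
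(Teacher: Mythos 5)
Your proposal is correct and matches the paper's treatment: the paper likewise quotes O'Neil for (i), cites the Sobolev--Lorentz embedding of \cite{Neil63,KY99,KK06} for (ii), and obtains (iii) by combining the two. Your explicit index check ($q_1=3$, $r_1=\infty$, $q_2=\tfrac{3r}{3-r}$, $r_2=r$, giving $1/q=1/r<1$ and $\min\{r_1,r_2\}=r$) is exactly the computation implicit in the paper's remark that (iii) is a combination of (i) and (ii).
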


The first inequality is due to O'Neil \cite{Neil63}, and the second inequality is proved in \cite{Neil63} for $\r^n$ and in \cite{KY99,KK06} for bounded domains. The third inequality is a combination of (i) and (ii) and it can also be found in \cite[(5)]{KK06} and \cite[(2.3)]{MT12}. Note that the positive constant $C_r$ in \eqref{ineq1} tends to infinity as $r$ tends to $3^-$. That is the reason why $q$ is required to be less than $3$ in the main result.

\subsection{Landau solutions of the Navier-Stokes equations}
\label{Sect2.2}

For each $b\in \r^3$, there exists a unique
$(-1)$-homogeneous solution $\U^b$ of the Navier-Stokes equations, together with an associated $(-2)$-homogeneous pressure $P^b$, such that $\U^b, P^b$ are smooth in $\r^3 \setminus \{0\}$ and they solve the equations
\begin{align}
\label{Landausolution}
-\Delta \U^b + (\U^b \cdot \nb) \U^b + \nb P^b =b\delta, 
\quad \div \  \U^b = 0,
\end{align}
in the distributional sense in $\r^3$.
Here, $\delta$ denotes the Dirac function. 
If $b=(0,0,\beta)$ with $\beta \ge 0$, the solutions $\U^b$ and $P^b$ are given by
\begin{align*}
    \U^b=\frac{2}{\rho} 
    \left[
    \left(\frac{a^2-1}{(a-\cos \vp)^2}-1\right)\e_{\rho}
    +\frac{-\sin \vp}{a-\cos \vp}\e_{\vp}
    \right],
    \quad
    P^b=\frac{4(a \cos \vp-1)}{\rho^2 (a-\cos \vp)^2}
\end{align*}
in spherical coordinates $(\rho,\te,\vp)$ with $x=(\rho\sin \vp \cos \te,\rho \sin \vp \sin \te, \rho \cos \vp )$, and  
\begin{align} \label{sphericalbases}
\e_{\rho}=\frac{x}{\rho}, \quad \e_{\te}=(-\sin \te, \cos \te,0), \quad \e_{\vp}=\e_{\te}\times \e_{\rho}.
\end{align}
The relationship between $\beta \ge 0$ and $a \in (1,\infty]$ is as follows, 
\[
\beta = 16 \pi 
\left[a + \frac 12 a^2 \log \frac {a-1}{a+1} + \frac{4a}{3(a^2-1)}\right].
\]
The corresponding Landau solution for a general $b$ can be obtained from rotation. 

Note that $\beta(a)\in[0,\infty)$ is strictly decreasing with respect to $a\in (1,\infty]$.
When $\beta>0$ is sufficiently small, the direct computation shows $|\U^b(x)| \lesssim \frac{1}{a} \frac{1}{|x|}$.
Meanwhile,  $\frac{\beta(a)}{16 \pi}=\frac{1}{a}+o(\frac{1}{a})\ (a\to \infty)$. Hence, we have $|\U^b(x)|\lesssim |b||x|^{-1}$. 
Similarly, if $|b|>0$ is sufficiently small, we have $|P^b(x)|\lesssim |b||x|^{-2}$. Both estimates hold for general $b$.


\subsection{Curl and divergence in spherical coordinates}

Let $\vv$ be a vector field expressed in spherical coordinates as
\[
\vv=v^{\rho}(\rho,\vp,\te) \e_{\rho}
+v^{\vp}(\rho,\vp,\te) \e_{\vp}
+v^{\te}(\rho,\vp,\te) \e_{\te},
\]
where $\e_{\rho},\e_{\vp},\e_{\te}$ are defined in \eqref{sphericalbases}.
The curl and the divergence of $\vv$ is
\begin{equation}\label{sphericalcurl}
\begin{aligned}
\curl\ \vv=&
\frac{1}{\rho\sin\vp}
\left(
\frac{\p(v^{\te}\sin\vp)}{\p\vp}-\frac{\p v^{\vp}}{\p\te}
\right)
\e_{\rho}
+\frac{1}{\rho}
\left(\frac{1}{\sin\vp}\frac{\p v^{\rho}}{\p\te}-\frac{\p(\rho v^{\te})}{\p\rho}\right)\e_{\vp}\\
&+\frac{1}{\rho}
\left(\frac{\p(\rho v^{\vp})}{\p\rho}-\frac{\p v^{\rho}}{\p\vp}\right)\e_{\te},  
\end{aligned}
\end{equation}
and 
\begin{align} \label{sphericaldiv}
\div\ \vv=
\frac{1}{\rho^{2}}\frac{\p(\rho^{2}v^{\rho})}{\p\rho}
+\frac{1}{\rho\sin\vp}\frac{\p(v^{\vp}\sin\vp)}{\p\vp}
+\frac{1}{\rho\sin\vp}\frac{\p v^{\te}}{\p\te},
\end{align}
respectively(cf.\cite[Section 2.5]{AW95}).
\section{Proof of the Main Theorem}

In this section, we will prove Theorem \ref{thm1}. Without loss of generality, we assume that $C_1^*\leq C_2^*$ and $C_1^*\leq 1$ in \eqref{est ub}.

\subsection{Proof of Theorem \ref{thm1} for \texorpdfstring{$1<q<\frac{3}{2}$}{}}

First, we will verify that there is a scalar function $p$ and a vector $b$ such that  $(\uu,\B, p)$ solves the following equations in the  sense of distributions 
\begin{align}
    \label{MHD_dis}
    \begin{cases}
    -\Delta \uu+(\uu \cdot \nb)\uu-(\B \cdot \nb)\B+\nb p =b \delta, \\
    -\Delta \B+ (\uu \cdot \nb) \B -(\B \cdot \nb) \uu=0,\\
    \div \ \uu= \div \ \B=0,
    \end{cases}
    \mbox{in}\ B_2.
\end{align} 

\begin{proposition}
\label{prop1}
    Let  $(\uu,\B)$ be a very weak solution of \eqref{MHD} in $B_2\setminus \{0\}$, which satisfies the vanishing condition \eqref{vanishing}. 
   If $(\uu,\B)$ satisfies \eqref{est ub}  in $B_2\setminus \{0\}$, where $C_1^*$ and $C_2^*$ are allowed to be large. Then there exists a scalar function $p$, such that $(\uu,\B,p)$ satisfies \eqref{MHD_dis} in the distributional sense with $b_i=\int_{|x|=1} (\bT_1)_{ij}n_j(x)$. 
    Moreover, $\uu,\B$ and $p$ are smooth in $B_2\setminus \{0\}$, and 
    \begin{equation*}
        |\nb ^k \uu(x)| + |\nb ^k \B(x)|+ |\nabla^{k-1} p(x)|  \le \frac{C_k}{|x|^{k+1}},\quad x\in B_{\frac{15}{8}}\setminus\{0\},
    \end{equation*}
    for $k=1,2,\cdots$.
    Consequently, $b$ satisfies $|b|\le C'$.  
    Here, the positive constants $C_k$ and $C'$ depend only on $C^*_1$ and $C^*_2$. 
   \end{proposition}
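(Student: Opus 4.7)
The plan is to split the argument into three stages: first, use the interior regularity theory for the MHD equations to show $\uu,\B\in C^\infty(B_2\setminus\{0\})$ and to construct the pressure $p$; second, derive pointwise derivative bounds by rescaling on balls of radius $\sim |x|$; third, convert the classical divergence-form identities $\p_j(\bT_1)_{ij}=0$ and $\p_j(\bT_2)_{ij}=0$ on $B_2\setminus\{0\}$ into distributional equations on all of $B_2$, identifying the Dirac source with the flux of $\bT_1$ through small spheres.

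For the smoothness step, on any compact $K\subset B_2\setminus\{0\}$ the bound \eqref{est ub} gives $\uu,\B\in L^\infty(K)$, which is more than enough to run a bootstrap of the Kim-Kozono type recalled in the excerpt: iteratively view the MHD system as a Stokes system for $(\uu,\B)$ with bounded forcing coming from the quadratic terms. This yields $\uu,\B\in C^\infty(B_2\setminus\{0\})$. The vector field $\Delta\uu-(\uu\cdot\nb)\uu+(\B\cdot\nb)\B$ is then a smooth curl-free vector field on the simply connected domain $B_2\setminus\{0\}\subset\r^3$, hence admits a smooth potential $-p$, unique up to an additive constant.

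For the derivative estimates, fix $x_0\in B_{15/8}\setminus\{0\}$, set $r=|x_0|/4$, so $B_r(x_0)\subset B_2\setminus\{0\}$ and $|\uu|+|\B|\lesssim 1/r$ on $B_r(x_0)$. The rescaled triple $\ti\uu(y)=r\uu(x_0+ry)$, $\ti\B(y)=r\B(x_0+ry)$, $\ti p(y)=r^2p(x_0+ry)$ then satisfies \eqref{MHD} on $B_1$ with uniformly bounded $L^\infty$ norms of $\ti\uu$ and $\ti\B$; interior estimates (treating the MHD system as a Stokes system with bounded right-hand side and applying Calder\'on-Zygmund for the pressure) give $\|\nb^k\ti\uu\|_{L^\infty(B_{1/2})}+\|\nb^k\ti\B\|_{L^\infty(B_{1/2})}+\|\nb^{k-1}\ti p\|_{L^\infty(B_{1/2})}\le C_k$. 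Undoing the scaling produces the claimed $C_k|x_0|^{-(k+1)}$ bound. The $k=1$ pressure estimate $|p(x)|\lesssim|x|^{-2}$ additionally requires fixing the additive constant (e.g. by requiring $p$ to have mean zero on the fixed annulus $B_{15/8}\setminus B_{1/8}$), after which the gradient bound $|\nb p|\lesssim|x|^{-3}$ is integrated along a radial segment from $|x|\sim 1$ inward.

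For the distributional equation, observe that $|\bT_1|+|\bT_2|\lesssim|x|^{-2}\in L^1_{\mathrm{loc}}(B_2)$ in three dimensions. Applying the divergence theorem to the classical identities on annuli $B_R\setminus B_\ve$ shows that the spherical fluxes $\int_{|x|=R}(\bT_1)_{ij}n_j\,dS$ and $\int_{|x|=R}(\bT_2)_{ij}n_j\,dS$ are independent of $R\in(0,2)$; the second vanishes by \eqref{vanishing}, and the first equals $b_i$ by definition. For $\Bvphi\in C_c^\infty(B_2;\r^3)$, integration by parts on $B_2\setminus B_\ve$ gives
\begin{equation*}
\int_{B_2\setminus B_\ve}(\bT_1)_{ij}\,\p_j\vp^i\,dx=-\int_{|x|=\ve}(\bT_1)_{ij}n_j\vp^i\,dS.
\end{equation*}
Expanding $\vp^i(x)=\vp^i(0)+O(|x|)$ on $|x|=\ve$ and using $|\bT_1|\lesssim\ve^{-2}$ turns the right-hand side into $-b_i\vp^i(0)+O(\ve)$ as $\ve\to0$, while dominated convergence sends the left-hand side to $\int_{B_2}(\bT_1)_{ij}\p_j\vp^i\,dx$; this yields the momentum equation of \eqref{MHD_dis}. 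The induction equation and the distributional divergence-free conditions follow by the same cutoff argument applied to $\bT_2$ and to the scalar identities, and $|b|\le C'$ is immediate from $|b_i|\le\int_{|x|=1}|\bT_1|\,dS$ together with the uniform bounds on the unit sphere. The main technical point is precisely this last step: one must track that the spherical integral on $|x|=\ve$ converges, in the limit $\ve\to0$, to exactly the conserved flux $b_i$ paired with $\vp^i(0)$, which relies on the sharp $|x|^{-2}$ bound for $\bT_1$ produced by the scaling argument together with the flux-conservation identity on the punctured ball.
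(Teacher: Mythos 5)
Your proposal is correct and follows essentially the same route as the paper: interior regularity plus rescaling at scale $|x_0|$ for the pointwise derivative bounds, the resulting estimate $|\bT_1|+|\bT_2|\lesssim |x|^{-2}$, flux conservation of $\int_{|x|=R}(\bT_k)_{ij}n_j$ across spheres, and the $\ve\to 0$ integration-by-parts argument identifying the Dirac mass, with the vanishing condition \eqref{vanishing} killing the boundary contribution for $\bT_2$. The only cosmetic differences are that the paper obtains the pressure by gluing the local pressures $p_R$ furnished by Lemma 2.2 of \cite{Tsai18} rather than via a curl-free potential on the simply connected punctured ball, and that your choice $r=|x_0|/4$ lets $B_r(x_0)$ leak outside $B_2$ when $|x_0|$ is close to $15/8$ --- harmless, since $r=\min\{|x_0|/4,\,1/16\}$ repairs it.
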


\begin{proof}
For each $R \in (0, 1]$, $(\uu,\B)$ is bounded in $B_2 \setminus  \bar{B}_R$.
Interior regularity estimates (see, e.g., \cite{Tsai18, GT01})  imply that $(\uu,\B)$  is smooth in $B_2\setminus \bar{B}_R$. Moreover, the scaling and bootstrapping arguments (see, e.g.,  \cite[Theorem 3.1]{ST00}),  imply 
\begin{align}
\label{eqn1}
    |\nb ^k \uu(x)| + |\nb ^k \B(x)|  \le \frac{C_k}{|x|^{k+1}},\quad x\in B_{\frac{15}{8}}\setminus\{0\}
\end{align}
for $k=1,2,\cdots$, where $C_k$ depends only on $C_1^*$ and $C_2^*$.
 
On the other hand, it follows from Lemma 2.2 in \cite{Tsai18}  that there exists a scalar function $p_R$, unique up to a constant, such that $(\uu,\B,p_R)$ solves \eqref{MHD} in $B_2 \setminus  \bar B_R$. For every $0< R< 1$, choose proper $p_R(x)$ such that
\begin{equation*}
    p_R(x) = p_1 (x), \ \ \ x\in B_2 \setminus \bar{B}_1. 
\end{equation*}
Hence, by the uniqueness of $p_R$, we have 
\begin{equation*}
    p_{R_1}(x) = p_{R_2}(x),\quad \forall \ 0< R_1< R_2 \leq 1,\ x\in B_2 \setminus \bar{B}_{R_2}. 
\end{equation*}
Define 
\begin{equation*}
    p(x) = p_R(x), \ \ \ x\in B_2 \setminus \bar{B}_R. 
\end{equation*}
\eqref{eqn1} implies that $|\nb p(x)| \le  C |x|^{-3}$ in $B_{\frac{15}{8}}\setminus \{0\}$. 
Integrating the inequality gives 
\begin{align}
\label{eqn2}   
|p(x)|\le \frac{C}{|x|^2}.
\end{align}
The estimate for $\nb^{k}p$ follows from \eqref{eqn1}.

\eqref{eqn1} and \eqref{eqn2} imply 
\begin{align}
\label{eqn3}
    |(\bT_k)_{ij}|\le \frac{C}{|x|^2},\ k=1,2,
\end{align}
where $\bT_k$ is defined by \eqref{def T}.
Let
\begin{align*}
    \bM_1(\uu,\B,p)&:=-\Delta \uu+(\uu \cdot \nb) \uu-(\B \cdot \nb) \B+\nb p,\\
    \bM_2(\uu,\B)&:=-\Delta \B+(\uu \cdot \nb) \B-(\B \cdot \nb) \uu.
\end{align*}
Then $\p_j (\bT_1)_{ij}=(\bM_1)_i$ and $\p_j (\bT_2)_{ij}=(\bM_2)_i$ in the distributional sense. The divergence theorem and the fact $\bM_1=0$ in $B_2\setminus\{0\}$ show
\begin{align}
\label{eqn4}
    b_i=\int_{|x|=1} (\bT_1)_{ij} n_j 
    =\int_{|x|=R} (\bT_1)_{ij} n_j,\ \ \ \ 0<R<2.
\end{align}
For any $0<\ve <1 $ and $\phi \in C^{\infty}_{c}(B_1)$,
\begin{align*}
    \langle (\bM_1)_i, \phi  \rangle
    =&- \int_{B_1\setminus B_{\ve}} (\bT_1)_{ij} \p_j \phi
    - \int_{B_{\ve}} (\bT_1)_{ij} \p_j \phi\\
    =& \int_{\p B_{\ve}} (\bT_1)_{ij} \phi n_j 
    - \int_{B_{\ve}} (\bT_1)_{ij} \p_j \phi.
\end{align*}
Equation \eqref{eqn3} implies that the second term above is bounded by $C\ve$. \eqref{eqn3} and \eqref{eqn4} yield
\begin{align*}
    \lim\limits_{\ve \rightarrow 0}
    \int_{\p B_{\ve}} (\bT_1)_{ij} \phi n_j 
    =b_i \phi(0).
\end{align*}
Then $(\uu,\B,p)$ solves the first equation of \eqref{MHD_dis} in the distributional sense.
The direct computations give
\begin{align*}
    \langle (\bM_2)_i, \phi  \rangle
    = 
    \int_{\p B_{\ve}} (\bT_2)_{ij} (\phi-\phi(0) ) n_j 
    - \int_{B_{\ve}} (\bT_2)_{ij} \p_j \phi.
\end{align*}
Here we used the vanishing condition \eqref{vanishing}.
The two integrals on the right hand are bounded by $C \ve$.  Let $\ve \to 0$, we have 
\begin{align*}
 \langle (\bM_2)_i, \phi  \rangle
    = 0.     
\end{align*}
Thus, $(\uu,\B,p)$ solves \eqref{MHD_dis} in the distributional sense.
\end{proof}


 With this proposition, Theorem \ref{thm1} holds when $q\in(1,\frac{3}{2})$. It remains to prove \eqref{tineq1} and \eqref{tineq2} for $\frac32 \leq q <3$.

\subsection{Estimates of \texorpdfstring{$\B$}{} }
Under the assumptions of Theorem \ref{thm1}, we will first prove \eqref{tineq2}, which says that $\B$ is less singular than $|x|^{-1}$ around the origin.  The key observation is that there is no Dirac function on the right-hand side of $\eqref{MHD_dis}_2$.

Let $\vp$ be a smooth cutoff function with $\vp=1$ in $B_{\frac{4}{3}}$ and $\vp=0$ in $B_{\frac{5}{3}}^c$. Define $\ww=\vp \B$. Then $\ww$ is supported in $ B_{\frac{5}{3}}$, and $\ww\in W^{1,q}_0(B_2)$ for $q\in(1,\frac{3}{2})$, due to \eqref{est ub} and \eqref{eqn1}. By virtue of Proposition \ref{prop1}, it is straightforward to verify that the vector field $\ww$ satisfies
\begin{align}
\label{w1q1}
\begin{cases} 
 -\Delta \ww + \nb \cdot (\uu\otimes \ww - \ww \otimes \uu) =\mathbf{f}, \ \ \mbox{in}\ B_2, \\
 \ww =0, \ \ \ \mbox{on}\ \partial B_2,
 \end{cases}
\end{align}
where $\mathbf{f}=-(\Delta \vp) \B-2(\nb \vp \cdot \nb)\B+(\uu \cdot \nb \vp )\B-(\B\cdot \nb \vp)\uu$. It follows from the regularity theory for elliptic equations that 
\[
\sup_{1\le q \le 10}\|\nb \B\|_{L^q(B_{\frac53}\setminus B_{\frac43})} \le CC_2^*.
\]
Consequently, it holds that 
\begin{align}
\label{w1q2}
    \sup_{1\le q \le 10} \|\mathbf{f}\|_{W^{-1,q}(B_2)} \le C C_2^*
\end{align}
for some $C>0$.

\begin{lemma}[Unique existence]
\label{lmUE} 
For any $q\in [\frac32,3)$, there exists a sufficiently small constant $\ve=\ve(q)>0$. 
If $\uu(x)$ satisfies $|\uu(x)|\le \frac{C_1^*}{|x|}$ in $B_2\setminus \{0\}$ with $C_1^* \le \ve$, then the problem \eqref{w1q1} with $\mathbf{f}$ satisfying \eqref{w1q2} has a unique solution in $W_{0}^{1, q}(B_2)$.  Moreover, the solution $\ww$ satisfies that 
\begin{equation}\label{eqnestimateB}
    \|\ww\|_{W^{1, q}(B_2)} \leq C C_2^*, 
\end{equation}
where $C>0$ is a uniform constant.
\end{lemma}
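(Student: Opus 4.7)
The plan is to rewrite \eqref{w1q1} as a fixed point equation on $W^{1,q}_0(B_2)$ and apply the Banach contraction principle. For $\mathbf{g} \in W^{-1,q}(B_2)$ with $1 < q < \infty$, let $\Psi(\mathbf{g}) \in W^{1,q}_0(B_2)$ denote the unique solution of the Dirichlet problem $-\Delta \Psi(\mathbf{g}) = \mathbf{g}$ with zero boundary data; classical $L^q$ elliptic regularity on the smooth domain $B_2$ provides the bound $\|\Psi(\mathbf{g})\|_{W^{1,q}(B_2)} \le K_q \|\mathbf{g}\|_{W^{-1,q}(B_2)}$. I would then consider the affine map
\[
T(\ww) := \Psi\bigl(\mathbf{f} - \nb \cdot (\uu \otimes \ww - \ww \otimes \uu)\bigr), \qquad \ww \in W^{1,q}_0(B_2),
\]
whose fixed points are exactly the $W^{1,q}_0$-solutions of \eqref{w1q1}.

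The main analytic step is to show that $T$ is a contraction when $C_1^*$ is small. Writing the transport term in divergence form, I would use the elementary bound
\[
\|\nb \cdot (\uu \otimes \ww - \ww \otimes \uu)\|_{W^{-1,q}(B_2)} \le 2 \sum_{i,j} \|u^i w^j\|_{L^q(B_2)}.
\]
The pointwise assumption $|\uu(x)| \le C_1^*/|x|$ yields $\|u^i\|_{L^3_{wk}(B_2)} \le C\, C_1^*$ for each component, and Lemma \ref{lemma1}(iii), applied componentwise with $1 < q < 3$, gives
\[
\|u^i w^j\|_{L^q(B_2)} \le C_q \|u^i\|_{L^3_{wk}(B_2)} \|w^j\|_{W^{1,q}(B_2)} \le C\, C_q\, C_1^* \|\ww\|_{W^{1,q}(B_2)}.
\]
Combined with the $K_q$ bound for $\Psi$, the Lipschitz constant of the linear part of $T$ on $W^{1,q}_0(B_2)$ is at most $C\, K_q\, C_q\, C_1^*$.

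Choosing $\ve(q) > 0$ so that $C\, K_q\, C_q\, \ve(q) \le \tfrac12$, the contraction principle provides a unique fixed point $\ww \in W^{1,q}_0(B_2)$ whenever $C_1^* \le \ve(q)$. From the fixed-point identity and the linearity of $\Psi$,
\[
\|\ww\|_{W^{1,q}(B_2)} \le K_q \|\mathbf{f}\|_{W^{-1,q}(B_2)} + \tfrac12 \|\ww\|_{W^{1,q}(B_2)},
\]
which, combined with \eqref{w1q2}, yields \eqref{eqnestimateB}. The main obstacle is the borderline behavior near $q = 3$: the constant $C_q$ from Lemma \ref{lemma1}(iii) blows up as $q \to 3^-$, so $\ve$ must be taken depending on $q$, which in turn forces the restriction $q < 3$. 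Note also that the linearity of \eqref{w1q1} in $\ww$ is precisely what allows one to avoid any smallness hypothesis on $C_2^*$, which appears only as a multiplicative factor through $\mathbf{f}$.
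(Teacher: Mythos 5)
Your proposal is correct and follows essentially the same route as the paper: both set up the same affine fixed-point map via the solution operator of the Dirichlet Laplacian, estimate the bilinear term by $\|\uu\otimes\ww\|_{L^q}\le C_q\|\uu\|_{L^3_{wk}}\|\ww\|_{W^{1,q}}$ using Lemma \ref{lemma1}(iii), and choose $\ve(q)$ small enough (necessarily $q$-dependent, since $C_q\to\infty$ as $q\to 3^-$) to make the map a $\tfrac12$-contraction, after which \eqref{eqnestimateB} follows from \eqref{w1q2}. Your closing remark that the linearity in $\ww$ is what spares any smallness condition on $C_2^*$ matches the paper's key observation.
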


\begin{proof}
For any fixed $q\in [\frac{3}{2},3)$ and $\ww \in W_{0}^{1, q}(B_2)$, let $\bar \ww=\Psi \ww$ be the unique solution in $W^{1,q}_0(B_2)$ of the Dirichlet problem
\begin{equation*}
 -\Delta \bar \ww=\mathbf{f}- \div(\uu \otimes \ww-\ww \otimes \uu)
\ \ \text{in }\ B_2,\quad \ww=0\ \  \text{on }\ \p B_2.   
\end{equation*}
 Then the classical regularity theory for elliptic equations (cf. \cite[Chapter 9]{GT01}) gives
\begin{align*}
    \|\bar{\ww}\|_{W^{1,q}(B_2)} 
    &\le N_q \left( \|\mathbf{f}\|_{W^{-1,q}(B_2)} + \|\nb \cdot (\uu \otimes \ww-\ww\otimes \uu)\|_{W^{-1,q}(B_2)} \right)\\
    &\le N_q \left(C C_2^*+ \| \uu \otimes \ww \|_{L^{q}(B_2)} \right).
\end{align*}
Here, $N_q>0$ is uniformly bounded for $q$ in any compact subset of $(1,+\infty)$. 
By virtue of Lemma \ref{lemma1}, 
\begin{align*}
    \|\uu\otimes \ww\|_{L^q(B_2)} 
    \le C_q \| \uu \|_{L^3_{wk}(B_2)} \| \ww \|_{W^{1,q}(B_2)}
    \le C_q C_1^* \| \ww \|_{W^{1,q}(B_2)}.
\end{align*}
Denote $\displaystyle N=\sup_{\frac32 \leq q<3}N_q$. Then we have
\begin{equation}\label{eqnboundB}
\|\bar{\ww}\|_{W^{1,q}(B_2)} \le N C C_2^* +NC_q C_1^* \| \ww \|_{W^{1, q}(B_2)}.
\end{equation}
Hence, $\Psi$ maps $W_0^{1, q}(B_2)$ into itself. 

Let $\Psi \ww_1$ and $\Psi \ww_2$ be the solutions in $W^{1, q}_0(B_2)$ which correspond to $\ww_1$ and $\ww_2$ respectively. Similarly, we have
\begin{align}
\label{diff est}
    \|\Psi \ww_1- \Psi \ww_2\|_{W^{1,q}(B_2)} 
    \le N_q \|\uu\otimes( \ww_1-\ww_2)\|_{L^q(B_2)} 
    \le N C_q C_1^* \|\ww_1-\ww_2\|_{W^{1, q}(B_2) }.
\end{align}
If we choose $\ve=\min\{(2NC_q)^{-1}, 1\}$ and impose $C_1^* \le \ve$, then $\|\Psi \ww_1- \Psi \ww_2\|_{W^{1,q}(B_2)}\le \frac{1}{2} \|\ww_1-\ww_2\|_{W^{1, q}(B_2)}$, which implies that $\Psi$ is a contraction mapping on $W_0^{1, q}(B_2)$. Consequently, $\Psi$ has a unique fixed point. The unique fixed point is exactly the unique solution of problem \eqref{w1q1} in $W_0^{1, q}(B_2)$. Moreover, \eqref{eqnboundB} yields \eqref{eqnestimateB} if $\ve=(2NC_q)^{-1}$.
\end{proof}


\begin{lemma}[Uniqueness]
\label{lmUn}
    For any $r\in(1,\frac32)$, there exists a sufficiently small constant $\ve=\ve(r)>0$. 
    If $\uu(x)$ satisfy $|\uu(x)|\le \frac{C_1^*}{|x|}$ with $C_1^* \le \ve$, and $\ww_1,\ww_2\in W^{1, r}_0(B_2)$ are solutions to \eqref{w1q1} and \eqref{w1q2}. Then $\ww_1=\ww_2$.
\end{lemma}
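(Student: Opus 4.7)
The plan is to reduce uniqueness to a linear instance of the contraction estimate \eqref{diff est} already established in the proof of Lemma \ref{lmUE}. Since problem \eqref{w1q1} is linear in $\ww$, the difference $\ww := \ww_1 - \ww_2 \in W^{1,r}_0(B_2)$ solves the \emph{homogeneous} equation
\begin{equation*}
-\Delta \ww + \nb \cdot (\uu \otimes \ww - \ww \otimes \uu) = 0 \ \ \text{in } B_2, \qquad \ww = 0 \ \ \text{on } \p B_2,
\end{equation*}
so the task reduces to forcing $\ww \equiv 0$ using only the smallness of $C_1^{\ast}$.

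I would rewrite the equation as $-\Delta \ww = \nb \cdot (\ww \otimes \uu - \uu \otimes \ww)$ with vanishing boundary data and apply the classical $L^r$-regularity for the Dirichlet Laplacian on $B_2$ (cf.\ \cite[Chapter 9]{GT01}), which is valid on the full range $r \in (1, \infty)$. Provided that the tensor $\uu\otimes\ww-\ww\otimes\uu$ lies in $L^r(B_2)$ (verified next), the unique $W^{1,r}_0$ solution of the associated Poisson problem must coincide with $\ww$, and hence
\begin{equation*}
\|\ww\|_{W^{1,r}(B_2)} \le N_r \|\uu \otimes \ww - \ww \otimes \uu\|_{L^r(B_2)},
\end{equation*}
with $N_r$ locally bounded for $r$ away from $1$ and $\infty$.

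The product on the right is handled by Lemma \ref{lemma1}(iii), which requires only $1<r<3$ and therefore covers the present range $r \in (1, 3/2)$. Combining it with the pointwise bound $|\uu(x)| \le C_1^{\ast}/|x|$ (which gives $\|\uu\|_{L^3_{wk}(B_2)} \le C C_1^{\ast}$) yields
\begin{equation*}
\|\uu \otimes \ww\|_{L^r(B_2)} \le C_r \|\uu\|_{L^3_{wk}(B_2)} \|\ww\|_{W^{1,r}(B_2)} \le C C_r C_1^{\ast} \|\ww\|_{W^{1,r}(B_2)},
\end{equation*}
and an identical bound for $\|\ww\otimes\uu\|_{L^r}$. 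Plugging back gives
\begin{equation*}
\|\ww\|_{W^{1,r}(B_2)} \le 2 N_r C C_r \, C_1^{\ast}\, \|\ww\|_{W^{1,r}(B_2)},
\end{equation*}
and choosing $\ve(r) := (4 N_r C C_r)^{-1}$ makes the prefactor at most $\tfrac{1}{2}$, forcing $\ww \equiv 0$, i.e.\ $\ww_1 = \ww_2$.

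I do not anticipate a genuine obstacle: the argument is essentially the linear piece of the fixed-point computation performed in \eqref{diff est}, and both Lemma \ref{lemma1}(iii) and the Calder\'on--Zygmund estimate for $-\Delta$ are comfortably valid throughout $r \in (1, 3/2)$. The only point requiring care is that the constants $N_r$ and $C_r$ degenerate as $r \to 1^+$, which is precisely why the threshold $\ve$ is allowed to depend on $r$; for each fixed $r$ in the required range, $\ve(r)$ is a well-defined positive number.
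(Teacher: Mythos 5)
Your proposal is correct and follows essentially the same route as the paper: the paper's proof simply invokes the estimate \eqref{diff est} (elliptic regularity for the Dirichlet Laplacian combined with Lemma \ref{lemma1}(iii) applied to the difference $\ww_1-\ww_2$) and chooses $\ve=(2N_rC_r)^{-1}$ to absorb the right-hand side. You have merely written out in full the computation that the paper compresses into one line, including the correct observation that the constants degenerate only as $r\to 1^+$ or $r\to 3^-$, so $\ve(r)$ is positive for each fixed $r\in(1,\tfrac32)$.
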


\begin{proof}
    The same estimate as \eqref{diff est} implies that \[     
    \|\ww_1-  \ww_2\|_{W^{1,r}(B_2)}      
    \le N_r C_r C_1^* \|\ww_1-\ww_2\|_{W^{1,r}(B_2)}   \le \frac{1}{2} \|\ww_1-\ww_2\|_{W^{1,r}(B_2)}     \] provided that $\ve=(2N_r C_r)^{-1}$. 
    Thus, we have $\ww_1=\ww_2$.
\end{proof}


\begin{proposition}\label{PropestB}
    For any $q \in [\frac32, 3)$, there exists a sufficiently small constant $\ve=\ve(q) >0$, such that if $(\uu, \B)$ satisfies \eqref{est ub} and \eqref{vanishing} with $C_1^* \le \ve$, then $\B \in W^{1, q}(B_{\frac43})$, and 
    \begin{equation*}
        \|\B\|_{W^{1, q}(B_{\frac43})} \leq CC_2^*. 
    \end{equation*}
\end{proposition}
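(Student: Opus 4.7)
The plan is to combine the classical regularity of $(\uu,\B)$ away from the origin with the existence/uniqueness theory for \eqref{w1q1} established in Lemmas \ref{lmUE} and \ref{lmUn}.

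First, I would verify that $\ww := \vp \B$ belongs to $W^{1,r}_0(B_2)$ for every $r\in(1,\frac{3}{2})$. This follows from the pointwise bounds $|\B(x)|\leq CC_2^*|x|^{-1}$ and $|\nb \B(x)|\leq C|x|^{-2}$ coming from \eqref{est ub} together with \eqref{eqn1} of Proposition \ref{prop1}: a direct integration shows $|\B|,\,|\nb \B|\in L^r(B_2)$ precisely for $r<\frac{3}{2}$, and the cutoff $\vp$ makes $\ww$ vanish on $\p B_2$.

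Next I would argue that $\ww$ is in fact a distributional solution of \eqref{w1q1} in the entire ball $B_2$ (not merely on $B_2\setminus\{0\}$). This is the content of Proposition \ref{prop1}: the vanishing condition \eqref{vanishing} guarantees that the second equation of \eqref{MHD_dis} has no Dirac mass on its right-hand side, so that $\B$ itself solves $-\Delta \B+\nb\cdot(\uu\otimes \B-\B\otimes\uu)=0$ on all of $B_2$ in the sense of distributions (using $\div\uu=\div\B=0$ to rewrite the convection terms in divergence form). Multiplying this identity by the cutoff $\vp$ produces exactly \eqref{w1q1} with the forcing $\mathbf{f}$ supported in the annulus $B_{\frac{5}{3}}\setminus B_{\frac{4}{3}}$, where $\mathbf{f}$ satisfies \eqref{w1q2} thanks to the smoothness of $(\uu,\B)$ away from the origin.

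The final step fixes $q\in[\frac{3}{2},3)$ and invokes Lemma \ref{lmUE}, choosing $\ve(q)$ as in that lemma: there exists a unique $\wtilde{\ww}\in W^{1,q}_0(B_2)$ solving \eqref{w1q1}, with $\|\wtilde{\ww}\|_{W^{1,q}(B_2)}\leq CC_2^*$. Since $B_2$ is bounded, the embedding $W^{1,q}_0(B_2)\hookrightarrow W^{1,r}_0(B_2)$ holds for any $r\in(1,\frac{3}{2})$, so $\wtilde{\ww}$ also lies in $W^{1,r}_0(B_2)$. Shrinking $\ve$ further so that the uniqueness Lemma \ref{lmUn} applies at this fixed $r$, we conclude that $\wtilde{\ww}=\ww$. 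Because $\ww\equiv \B$ on $B_{\frac{4}{3}}$, this gives $\B\in W^{1,q}(B_{\frac{4}{3}})$ with $\|\B\|_{W^{1,q}(B_{\frac{4}{3}})}\leq CC_2^*$, as desired.

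I expect the only subtlety to be making sure the smallness threshold $\ve$ is chosen compatibly for the existence result at exponent $q$ and the uniqueness result at some fixed $r\in(1,\frac{3}{2})$; since both thresholds are of the form $(2N C_{\bullet})^{-1}$ with $N$ and $C_\bullet$ depending only on the respective exponents, taking their minimum suffices. The genuinely nontrivial input is the passage from the classical equation on $B_2\setminus\{0\}$ to a distributional equation on $B_2$, which is precisely where the vanishing hypothesis \eqref{vanishing} is essential.
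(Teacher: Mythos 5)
Your proposal is correct and follows essentially the same route as the paper: establish that $\vp\B$ solves \eqref{w1q1} on all of $B_2$ (via Proposition \ref{prop1} and the vanishing condition), obtain a $W^{1,q}_0$ solution from Lemma \ref{lmUE}, and identify it with $\vp\B$ by the uniqueness Lemma \ref{lmUn} at a fixed exponent $r\in(1,\tfrac32)$ (the paper takes $r=\tfrac43$). The compatibility of the two smallness thresholds that you flag as the only subtlety is handled implicitly in the paper in exactly the way you describe.
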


\begin{proof} 
According to Proposition \ref{prop1}, $\B \in W^{1, r}(B_\frac{15}{8})$ for every $1<r<\frac32$. Let $r= \frac43$. Due to Lemma \ref{lmUn}, $\ww=\varphi \B$ is the unique solution to \eqref{w1q1} in $W_0^{1, \frac43}(B_2)$ when $C_1^*$ is sufficiently small. 
On the other hand, by virtue of Lemma \ref{lmUE}, there is a solution $\tilde{\ww}$ to \eqref{w1q1} in $W_0^{1, q}(B_2)\subset W_0^{1, \frac43}(B_2)$ provided $C_1^*\le \ve(q)$. Hence, $\ww =\varphi \B =\tilde{\ww}\in W_{0}^{1, q}(B_2)$ and the proof for Proposition \ref{PropestB} is completed.
\end{proof}

Next, we give a refined pointwise bound of $\B$, based on the $W^{1,q}(\frac32< q< 3)$ regularity estimate of $\B$.

\begin{proposition}
\label{lm pt}
For any $q\in (\frac{3}{2},3)$, there exists a sufficiently small constant $\ve=\ve(q)>0$. 
If $(\uu, \B)$ satisfies \eqref{est ub} and \eqref{vanishing} with $C_1^* \le \ve$, then 
\begin{align}\label{pointwisebound}
    \sup_{x\in B_1}|x|^{\frac{3}{q}-1}|\B(x)| \le CC_2^*.
\end{align}    
\end{proposition}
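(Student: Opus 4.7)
The plan is a rescaling argument that leverages the global $W^{1,q}$-bound from Proposition \ref{PropestB} together with Sobolev--Morrey embedding and elliptic regularity. Fix $x_0 \in B_1$ with $r_0 = |x_0|$. When $r_0$ is bounded below by a fixed positive constant, the conclusion $|\B(x_0)| \le CC_2^* r_0^{1-3/q}$ follows directly from \eqref{est ub}, so the focus is on small $r_0$. Rescale to unit scale by setting $\tilde \B(y) := r_0 \B(x_0 + r_0 y)$ and $\tilde \uu(y) := r_0 \uu(x_0 + r_0 y)$ for $y \in B_{1/2}$; these fields solve the same MHD system on $B_{1/2}$, and \eqref{est ub} together with $|x_0 + r_0 y| \ge r_0/2$ yields the uniform pointwise bounds $|\tilde \uu|, |\tilde \B| \le 2C_2^*$.

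The decisive observation is that $q > 3/2$ makes the Sobolev exponent $q^* := 3q/(3-q)$ strictly larger than $3$. Sobolev embedding applied to Proposition \ref{PropestB} gives $\|\B\|_{L^{q^*}(B_{4/3})} \le CC_2^*$, and since $1 - 3/q^* = 2 - 3/q > 0$, a change of variables produces the crucial small-factor bound
\begin{equation*}
    \|\tilde \B\|_{L^{q^*}(B_{1/2})} = r_0^{1-3/q^*} \|\B\|_{L^{q^*}(B_{r_0/2}(x_0))} \le CC_2^*\, r_0^{2-3/q}.
\end{equation*}
Recasting the magnetic equation in divergence form, $-\Delta \tilde B^i = \partial_j(\tilde B^j \tilde u^i - \tilde u^j \tilde B^i) =: \partial_j F^{ij}$, the uniform $L^\infty$-bound on $\tilde \uu$ transfers the smallness to the source: $\|F\|_{L^{q^*}(B_{1/2})} \le CC_2^*\, r_0^{2-3/q}$.

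To conclude, standard Calder\'on--Zygmund theory applied to $-\Delta w = \partial_j F^j$ gives $\|\nabla \tilde \B\|_{L^{q^*}(B_{3/8})} \le C(\|F\|_{L^{q^*}(B_{1/2})} + \|\tilde \B\|_{L^{q^*}(B_{1/2})}) \le CC_2^*\, r_0^{2-3/q}$; since $q^* > 3$, Morrey's embedding then yields $\|\tilde \B\|_{L^\infty(B_{1/4})} \le C\|\tilde \B\|_{W^{1,q^*}(B_{3/8})} \le CC_2^*\, r_0^{2-3/q}$. Evaluating at $y = 0$ and undoing the scaling via $\tilde \B(0) = r_0 \B(x_0)$ produces the desired bound $|\B(x_0)| \le CC_2^*\, r_0^{1-3/q}$.

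The main technical subtlety is that although the rescaled $W^{1,q}$-norm of $\tilde \B$ does not itself decay in $r_0$, one gains the positive power $r_0^{1-3/q^*}$ by passing from $L^q$ to the higher exponent $L^{q^*}$ with $q^* > 3$. This gain is what forces the strict restriction $q > 3/2$: at $q = 3/2$ one has $q^* = 3$ and both the scaling improvement and Morrey's embedding degenerate, which is consistent with the fact that the claimed exponent $1 - 3/q$ reduces to the trivial bound $-1$ in this limit.
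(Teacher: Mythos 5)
Your argument is correct and is essentially the paper's own proof in rescaled form: both localize at scale $|x_0|$, feed the $L^{q^*}$ bound from Proposition~\ref{PropestB} (with $q^*=\frac{3q}{3-q}>3$) into an interior elliptic estimate for the divergence-form magnetic equation, and then pass to $L^\infty$ by an embedding that hinges on $q^*>3$ — the paper simply works at the original scale and closes with a Gagliardo--Nirenberg interpolation (exponent $\theta=\frac{3}{q}-1$) instead of rescaling to unit scale and invoking Morrey. One small point to tighten: in bounding $F$ you should use $|\tilde{\uu}|\le 2C_1^*\le 2$ (not merely $\le 2C_2^*$, which is allowed to be large) so that the final constant is $CC_2^*$ rather than $C(C_2^*)^2$; with that adjustment the proof is complete.
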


\begin{proof}
According to Proposition \ref{PropestB}, $\B\in W^{1,q}(B_\frac43)$ if we choose $C_1^*\le \ve$. For any $x_0\in B_{1}\setminus \{0\}$, let $R=\frac{|x_0|}{6}$ and $E_k=B(x_0,kR),k=1,2$. The Sobolev embedding inequality and H\"older inequality show that $\B\in L^{q^*}(E_2)\cap L^{1}(E_2)$ and 
\begin{align*}
    \|\B\|_{L^{q^*}(E_2)} \lesssim C_2^*, \quad
    \|\B\|_{L^{1}(E_2)} \lesssim \|\B\|_{L^{q^*}(E_2)} R^{4-\frac{3}{q}}\lesssim C_2^* R^{4-\frac{3}{q}},
\end{align*}
where $\frac{1}{q^*}=\frac{1}{q}-\frac{1}{3}$. Since $\B$ satisfies $-\Delta \B=-\nb \cdot ( \uu \otimes \B- \B \otimes \uu)$ in $E_2$, the classical regularity theory for elliptic equations implies that
\begin{align*}
    \|\nb \B\|_{L^{q^*}(E_1) } 
    \lesssim
    \| \uu \otimes \B- \B \otimes \uu \|_{L^{q^*}(E_2) } 
    +R^{-4+\frac{3}{q^*}} \| \B \|_{L^{1}(E_2) }
    \lesssim C_2^* R^{-1}.
\end{align*}
Here we used the fact that $|(\uu \otimes \B- \B \otimes \uu)(x)|\lesssim |x|^{-1}|\B| \lesssim R^{-1}|\B|$, for every $x\in E_2$. The Gagliardo-Nirenberg inequality yields that 
\[
\|\B\|_{L^{\infty} (E_1) }
\lesssim 
\| \B\|_{L^{q^*}(E_1) }^{1-\te}
\| \nb \B\|_{L^{q^*}(E_1) }^{\te}
+R^{-3}\| \B\|_{L^{1}(E_1)},
\]
where $\te=3/q-1$ satisfying $0 =(1-\te)/q^*+\te(1/q^*-1/3)$. Hence, we have $\| \B \|_{L^{\infty} (E_1) } \lesssim C_2^* R^{-\te}$. Since $x_0$ is arbitrary, we have the pointwise bound \eqref{pointwisebound} of $\B$. 
\end{proof}

\subsection{Estimates of \texorpdfstring{$\uu-\U^b$}{}}
Assume that $b=(b_1,b_2,b_3)$ with  $b_i=\int_{|x|=1}(-\p_i u^j-\p_j u^i+u^i u^j-B^iB^j +p \delta_{ij})n_j(x),\ i=1,2,3$.
Let $\mathbf{U}^b$ be the Landau solution to the following equation
\[
-\Delta \mathbf{U}^b + (\mathbf{U}^b \cdot \nabla)\mathbf{U}^b+\nabla P^b=b \delta, 
\quad \ \div\ \U^b=0, \quad \text{in}\ \mathbb{R}^3. 
\] 
In this subsection, we prove \eqref{tineq1}. To begin with, we prove \eqref{tineq1} when $(\uu,\B)$ satisfies \eqref{est ub}, \eqref{vanishing} and $C_1^*=C_2^*$ is small.

\begin{proposition}
\label{prop2}
Under the assumptions stated in Proposition \ref{prop1}, for any $q\in (1,3)$, there exists a sufficiently small $\ve=\ve(q)>0$, such that
\[
\| \uu-\U^b \|_{W^{1,q}(B_1)} + \sup_{x\in B_1} |x|^{\frac{3}{q}-1}|(\uu -\U^b)(x)| \le CC_1^*,
\]
provided $C_1^*=C_2^*\le \ve$.
\end{proposition}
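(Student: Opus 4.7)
The plan is to adapt the contraction-mapping scheme of Lemma \ref{lmUE}, applied now to $\zz := \uu - \U^b$ instead of to $\B$, with $\U^b$ playing the role of a background drift. Subtracting \eqref{Landausolution} from the first line of the distributional system \eqref{MHD_dis}, the Dirac mass $b\delta$ cancels and $(\zz, \pi)$ with $\pi := p - P^b$ satisfies
\begin{equation*}
-\Delta \zz + \nb \cdot(\zz \otimes \U^b + \U^b \otimes \zz + \zz\otimes \zz - \B\otimes \B) + \nb \pi = 0, \qquad \div\ \zz = 0,
\end{equation*}
distributionally in $B_2$. Multiplying by a cutoff $\vp\in C^{\infty}_c(B_{5/3})$ with $\vp\equiv 1$ on $B_{4/3}$ and correcting the resulting nonzero divergence with a Bogovskii-type operator (whose corrector is supported in the annulus $B_{5/3}\setminus B_{4/3}$, where every quantity is smooth by Proposition \ref{prop1}), I obtain $\ww \in W^{1,r}_{0,\sigma}(B_2)$ for some $r\in(1,3/2)$ solving a Stokes-type problem
\begin{equation*}
-\Delta \ww + \nb \cdot(\ww\otimes \U^b + \U^b \otimes \ww + \ww\otimes \ww) + \nb \tilde p = \BF, \quad \div\ \ww = 0, \quad \ww|_{\p B_2}=0,
\end{equation*}
with $\BF \in W^{-1,q}(B_2)$ collecting $\vp\,\nb \cdot(\B\otimes\B)$ and the smooth commutator terms from the cutoff.

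Next I would define, on a closed ball in $W^{1,q}_{0,\sigma}(B_2)$, the map $\Psi$ sending $\bar\ww$ to the unique Leray solution of the above system with the piece quadratic in $\bar\ww$ moved to the right-hand side. Elliptic regularity for the Stokes system, together with Lemma \ref{lemma1}(iii), bounds the contribution linear in $\bar\ww$ by $C\|\U^b\|_{L^{3}_{wk}(B_2)}\|\bar\ww\|_{W^{1,q}} \le C|b|\|\bar\ww\|_{W^{1,q}}$ and the quadratic contribution by $C\|\bar\ww\|^2_{W^{1,q}}$. For the source, Proposition \ref{PropestB} applied with $q_0\in (3/2,3)$ close to $3$ combined with Sobolev embedding yields $\B\otimes\B \in L^q(B_{5/3})$ with norm $\lesssim (C_2^*)^2 \le C_1^*$ (using $C_1^*=C_2^*\le 1$), while the commutator pieces are $O(C_1^*)$ via \eqref{eqn1}. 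Under the smallness input $|b|\lesssim C_1^*$ discussed below, choosing the radius $M=C'C_1^*$ with $C'$ large enough makes $\Psi$ a strict contraction on $\{\|\bar\ww\|_{W^{1,q}}\le M\}$ for $C_1^*\le\ve$ small. Its unique fixed point $\ww_\ast$ satisfies $\|\ww_\ast\|_{W^{1,q}(B_2)}\le CC_1^*$; the analogue of Lemma \ref{lmUn} at an exponent $r<3/2$ (available via the Sobolev embedding $W^{1,q}_0\hookrightarrow W^{1,r}_0$ and the fact that $\ww$ itself lies in $W^{1,r}_0(B_2)$) forces $\ww_\ast=\ww$, so $\uu-\U^b \in W^{1,q}(B_{4/3})$ with the stated bound. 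The pointwise estimate then follows from the scaling/Gagliardo-Nirenberg argument of Proposition \ref{lm pt}, applied on balls $E_k=B(x_0,k|x_0|/6)$ to the elliptic equation satisfied by $\zz$, whose source is pointwise controlled by $C(C_1^*+C_2^*)|x|^{-2}$.

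The main technical obstacle is the smallness estimate $|b|\lesssim C_1^*$: without it, the linear perturbation $\nb \cdot(\bar\ww\otimes \U^b + \U^b\otimes \bar\ww)$ cannot be made small enough for $\Psi$ to contract, since one only gets the a priori bound $|b|\le C'(C_1^*,C_2^*)$ from Proposition \ref{prop1}. My plan is to exploit the scale-invariance of the representation $b_i=\int_{|x|=R}(\bT_1)_{ij}n_j$ (valid for every $R\in(0,2)$ by \eqref{eqn4}) and to apply interior regularity to the rescaled solutions $(R\uu(R\,\cdot),R\B(R\,\cdot))$, which satisfy the \emph{same} pointwise bounds with constants $C_1^*,C_2^*$: the linear piece $-\p_i u^j-\p_j u^i$ of $\bT_1$ is then $O(C_1^*)$, the quadratic pieces $\uu\otimes\uu-\B\otimes\B$ are $O((C_1^*)^2+(C_2^*)^2)$, and $p\,I$ is likewise $O((C_1^*)^2+(C_2^*)^2)$ after a proper normalization of $p$ on the sphere $|x|=R$. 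Under the standing hypothesis $C_1^*=C_2^*\le\ve$ this collapses to the desired $|b|\lesssim C_1^*$.
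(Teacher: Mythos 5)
Your proposal is correct and follows essentially the same route as the paper: subtract the Landau solution so the Dirac mass cancels, localize with a cutoff plus a Bogovskii corrector supported in the annulus, run a contraction/uniqueness argument in $W^{1,q}_0$ for the perturbed system with the $\B\otimes\B$ source controlled via Proposition \ref{PropestB}, and finish with the rescaled Gagliardo--Nirenberg argument of Proposition \ref{lm pt}; the paper delegates the contraction step to Lemma \ref{lm u1} (Tsai's Lemmas 3.1--3.2) and obtains $|b|\lesssim C_1^*$ by rerunning the gradient estimates of Proposition \ref{prop1} with $C_1^*=C_2^*$ small, which matches your rescaling argument for the flux integral. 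The only point you gloss over is the compatibility condition $\int \ee\cdot\nb\vp=0$ needed for the Bogovskii corrector, which the paper verifies from $\div\,\ee=0$ and the vanishing flux of $\ee$ through small spheres; this is routine and does not affect the argument.
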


\begin{proof}
According to Proposition \ref{prop1}, it holds that
\[
-\Delta \uu+(\uu \cdot \nb)\uu-(\B \cdot \nb) \B 
+\nb p=b \delta ,\quad  \div\, \uu=0, \quad \text{in } B_2.
\]
Let $\ee=\uu-\U^b$ and $\tilde{p}=p-P^b$, we get
\[
-\Delta \ee+(\U^b \cdot \nb)\ee+(\ee \cdot \nb)(\U^b+ \ee) 
-(\B \cdot \nb )\B +\nb \tilde{p}=0,\quad  \div\ \ee=0, \quad \text{in } B_2.
\]

Assume that $C_1^*= C_2^* < 1$, following the proof for Proposition \ref{prop1}, we have 
\begin{equation*}
    |\nabla \uu(x)| + |\nabla \B(x)| + |p(x)|\leq\frac{CC_1^*}{|x|^2}, \ \ \ x\in B_{\frac{15}{8}}\setminus \{0\}.
\end{equation*}
and consequently, $|b|\leq CC_1^*$. 
If $C_1^* = C_2^*\le \ve$ with sufficiently small $\ve>0$,  according to the estimates in Section \ref{Sect2.2}, it holds that 
\begin{equation*}
    |\U^b(x)| \leq \frac{CC_1^*}{|x|}, \ \ |P^b(x)|\leq \frac{CC_1^*}{|x|^2},\quad   x\in B_2\setminus\{0\},
\end{equation*} 
and then 
\begin{align}
\label{esteta}
|\ee(x)|\le \frac{CC_1^*}{|x|},\ \ \ |\tilde{p}(x)|\le \frac{CC_1^*}{|x|^2}, \ \ \ \ x\in B_{\frac53} \setminus\{0\}.
\end{align}
Let $\vp$ be the cutoff function defined in Section 3.2. Direct computation shows
\[
\int_{B_{\frac53}\setminus B_{\frac43}} -\ee \cdot \nb \vp
= 
\int_{\p B_{\frac43} } \ee \cdot \Bn,
\]
where we used the fact that $\vp=0$ on $\p B_{\frac53}$ and $\vp=1$ on $\p B_{\frac43}$.
Since $\ee$ is divergence-free, we have
\[
\int_{\p B_{\frac43} } \ee \cdot \Bn
=\int_{\p B_{\ve} } \ee \cdot \Bn.
\]
Taking \eqref{esteta} into consideration, we have 
\[
\int_{B_{\frac53}\setminus B_{\frac43}} -\ee \cdot \nb \vp=
\lim_{\ve \to 0} \int_{\partial B_{\ve}} \eta \cdot \Bn = 0.
\]
Let $\vv=\vp \ee+\zt$, where $\zt \in W_0^{1, 10}(B_{\frac53}\setminus B_{\frac43})$ is the solution of 
\begin{align*}
\begin{cases}
    \div\ \zt=-\ee \cdot \nb \vp, \quad &\text{in} \ \ B_{\frac53}\setminus B_{\frac43}\\
    \zt =0, \quad &\text{on}\ \ \partial (B_{\frac53}\setminus B_{\frac43})
    \end{cases}
\end{align*} 
Extend $\zt$ by zero to $B_{\frac43}$, retaining the same notation $\zt$ for the extended function. Then $\zt$ satisfies the following estimate
\begin{align}
\label{zeta est}
\|\nb \zt \|_{L^{10}(B_{\frac53})} \lesssim \| \ee \cdot \nb \vp \|_{L^{10}(B_{\frac53})} \lesssim C_1^*. 
\end{align}
The existence of $\zt$ is guaranteed by  \cite[III.3]{Galdi11}.

According to \eqref{esteta}, \eqref{zeta est} and Lemma \ref{lemma1},  $\vv \in W^{1,r}_0(B_\frac53)\cap L^3_{wk}(B_\frac53)$ for each $r\in (1,\frac{3}{2})$. 
The direct calculation shows that $\vv$ is a solution to the following problem
\begin{equation}
\begin{aligned}
\label{eqn6}
\begin{cases}
-\Delta \vv+(\U^b \cdot \nb)\vv+(\vv \cdot \nb)(\U^b+ \vv) +\nb \tilde{p}=\mathbf{g},\quad  &\text{in } B_\frac53, \\
\div\ \vv=0,
\quad  &\text{in } B_\frac53, \\
\vv=0, \quad &\text{on } \partial B_{\frac53}, 
\end{cases}
\end{aligned}
\end{equation}
where 
\begin{align*}
\mathbf{g}
=&-(\Delta \vp) \ee -2(\nb \vp \cdot \nb) \ee -\Delta \zt +(\U^b \cdot \nb \vp) \ee +(\U^b \cdot \nb ) \zt
+\vp (\ee \cdot \nb \vp) \ee\\
&+(\vp^2-\vp) (\ee \cdot \nb) \ee 
+\vp (\ee \cdot \nb) \zt
+(\zt \cdot \nb) (\vp \ee +\zt +\U^b )\\
&+\nb \cdot(\vp \B \otimes \B) -(\B \otimes \B) \cdot \nb \vp +(1-\vp) \nb \tilde {p}.
\end{align*}
\eqref{est ub}, \eqref{eqn1} and Proposition \ref{PropestB} show that $\B\in W^{1,\frac{60}{23}}(B_{\frac53})$ and $\|\B\|_{W^{1, \frac{60}{23}}(B_{\frac53})} \leq CC_1^*$ if $C_1^*=C_2^*\le \ve(\frac{60}{23})$.  Then by Sobolev embedding theorem, we can obtain that  $\|\B \otimes \B\|_{ L^{10} ( B_{\frac53} ) } \lesssim \|\B\|_{W^{1, \frac{60}{23}}(B_{\frac53})}^2 \lesssim (C_1^*)^2 \lesssim C_1^*$. 
Thus we have
\begin{align}
    \label{eqn7}
    \sup_{1\le q \le 10} \| \mathbf{g} \|_{W^{-1,q}(B_{\frac53})} \le C C_1^*.
\end{align}

Similar to the proofs of Lemma \ref{lmUE} and Lemma \ref{lmUn}, we have the following lemma. 

\begin{lemma}
\label{lm u1}
\begin{enumerate}[(i)]
    \item (Unique existence) For any $q\in [\frac{3}{2},3)$, there exists a  small constant $\ve=\ve(q)>0$. If $(\uu, \B)$ satisfies \eqref{est ub} and \eqref{vanishing} with $C_1^* \le \ve(q)$,  the problem \eqref{eqn6} with \eqref{eqn7} admits a unique solution $\vv$ in $W_0^{1, q}(B_{\frac53})$ satisfying
    \begin{equation*}
        \|\vv\|_{W^{1, q}(B_\frac53)} \leq C C_1^*, 
    \end{equation*}
    where constant $C$ is independent of $C_1^*$ and $q$.
    \item (Uniqueness) Let $\ve>0$ be sufficiently small. Assume that $1 < q < \frac32$ and let $\vv_1,\vv_2$ be solutions to \eqref{eqn6} in $W^{1,q}_0(B_{\frac53}) \cap L^3_{wk}(B_{\frac53})$ with $C_1^*+ \|\vv_1\|_{L^3_{wk}(B_{\frac53}) } +\|\vv_2\|_{L^3_{wk}(B_{\frac53})} \le \ve$. Then $\vv_1=\vv_2$.
\end{enumerate}
\end{lemma}
Lemma \ref{lm u1} is exactly a combination of  Lemmas 3.1 and 3.2  in \cite{Tsai18}. We omit the details here.

We then obtain the $W^{1,q}(B_{\frac43})$ regularity of $\uu-\U^b$ for $q\in [\frac{3}{2},3)$. Then, in a manner analogous to the proof of Proposition \ref{lm pt}, we derive the pointwise bound of $\uu - \U^b$.

\begin{lemma}
\label{lm u2}
    Assume that $(\uu,\B)$ is the solution prescribed  in Proposition \ref{prop1} with $C_1^*=C_2^*$.
    For any $q\in (1,3)$, there exists a small constant $\ve=\ve(q)>0$ such that if $(\uu, B)$ satisfies \eqref{est ub} with $C_1^* \le \ve$, then 
\begin{align*}
    \sup_{x\in B_1}|x|^{\frac{3}{q}-1}|(\uu-\U^b)(x)| \le CC_1^*.
\end{align*}
\end{lemma}
Combining Lemma \ref{lm u1} with Lemma \ref{lm u2}, we complete the proof of  Proposition \ref{prop2}. 
\end{proof}

Next, we remove the smallness assumption on $C_2^*$. The key point is that the pointwise bound \eqref{pointwisebound} for $\B$ implies $|x| |\B(x)|$ is small in a neighborhood of the origin.

\begin{proposition}\label{prop3.9}
Under the assumptions of Proposition \ref{prop1}, for any $q\in (\frac{3}{2},3)$, there exists a sufficiently small constant $\ve=\ve(q)>0$ such that if $C_1^* \le \ve$, then
\begin{equation}\label{lemma3.9-1}
 \|\uu- \U^b\|_{W^{1,q}(B_1)} + \sup_{x\in B_{1}} |x|^{\frac{3}{q}-1}|(\uu- \U^b)(x)| \le  \tilde{C}(C_1^*, C_2^*)   
\end{equation}
Here, the positive constant $\tilde{C}(C_1^*, C_2^*)$ depends  on $C_1^*$ and $C_2^*$.     
\end{proposition}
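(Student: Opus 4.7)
The plan is to remove the largeness of $C_{2}^{\ast}$ by an inner rescaling. The refined bound from Proposition \ref{lm pt} shows that the scale-invariant quantity $|x|\,|\B(x)|$ decays with a positive power of $|x|$, so a dilation to a sufficiently small ball reduces the problem to the balanced small-constant case $C_{1}^{\ast}=C_{2}^{\ast}\le\varepsilon$ already handled by Proposition \ref{prop2}. One then scales back and combines with interior regularity on the outer annulus.

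\textbf{Step 1: inner rescaling and invariance.} Fix an auxiliary exponent $q_{0}\in(\frac{3}{2},3)$; Proposition \ref{lm pt} yields $|\B(y)|\le CC_{2}^{\ast}|y|^{1-3/q_{0}}$ on $B_{1}$. For $\delta\in(0,\frac{1}{2})$ to be chosen, set
\[
\tilde{\uu}(x)=\delta\uu(\delta x),\quad\tilde{\B}(x)=\delta\B(\delta x),\quad\tilde{p}(x)=\delta^{2}p(\delta x),
\]
which solves \eqref{MHD} on $B_{2}\setminus\{0\}$ by the standard scaling invariance. Since $\bT_{k}(\tilde{\uu},\tilde{\B},\tilde{p})_{ij}(x)=\delta^{2}\bT_{k}(\uu,\B,p)_{ij}(\delta x)$, a change of variables together with $\partial_{j}(\bT_{k})_{ij}=0$ in $B_{2}\setminus\{0\}$ (Proposition \ref{prop1}) gives
\[
\int_{|x|=1}\bT_{2}(\tilde{\uu},\tilde{\B})_{ij}n_{j}=\int_{|y|=\delta}\bT_{2}(\uu,\B)_{ij}n_{j}=0,
\]
so the vanishing condition \eqref{vanishing} is inherited; the analogous identity for $\bT_{1}$ yields $\tilde{b}=b$, so the associated Landau solution is unchanged.

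\textbf{Step 2: applying Proposition \ref{prop2}.} For $x\in B_{2}\setminus\{0\}$, the trivial bound gives $|\tilde{\uu}(x)|\le C_{1}^{\ast}/|x|$, while the pointwise decay above yields $|\tilde{\B}(x)|\le CC_{2}^{\ast}(2\delta)^{2-3/q_{0}}/|x|$. Because $2-3/q_{0}>0$, one chooses $\delta=\delta(C_{1}^{\ast},C_{2}^{\ast})$ so small that $CC_{2}^{\ast}(2\delta)^{2-3/q_{0}}\le C_{1}^{\ast}\le\varepsilon(q)$; then $(\tilde{\uu},\tilde{\B})$ satisfies \eqref{est ub} and \eqref{vanishing} with common small constant $C_{1}^{\ast}$, and Proposition \ref{prop2} applies to the rescaled problem. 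Combined with the $(-1)$-homogeneity $\delta\U^{b}(\delta x)=\U^{b}(x)$, this produces
\[
\|\delta(\uu-\U^{b})(\delta\,\cdot)\|_{W^{1,q}(B_{1})}+\sup_{x\in B_{1}}|x|^{\frac{3}{q}-1}\delta\,|(\uu-\U^{b})(\delta x)|\le CC_{1}^{\ast}.
\]

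\textbf{Step 3: scaling back and outer annulus.} Undoing the dilation, the display above translates into the desired $W^{1,q}$- and pointwise-estimates for $\uu-\U^{b}$ on $B_{\delta}$, with constants depending on $\delta(C_{1}^{\ast},C_{2}^{\ast})$ and $C_{1}^{\ast}$ only. On the annulus $B_{1}\setminus B_{\delta}$, Proposition \ref{prop1} yields smoothness of $\uu$ together with pointwise bounds of order $\delta^{-k-1}$ for $\uu$ and, from Section \ref{Sect2.2}, for $\U^{b}$; hence $\uu-\U^{b}$ satisfies the corresponding $W^{1,q}$- and pointwise-bounds on $B_{1}\setminus B_{\delta}$. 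Summing the inner and outer estimates produces \eqref{lemma3.9-1}. The main obstacle is that a naive rescaling does \emph{not} shrink $C_{2}^{\ast}$; the gain $\delta^{2-3/q_{0}}$ supplied by Proposition \ref{lm pt}, which is positive only because $q_{0}>\frac{3}{2}$, is exactly what makes the reduction succeed, and one must also track carefully that the Landau constant $b$ and the vanishing condition remain invariant under this inner rescaling.
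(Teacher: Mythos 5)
Your proposal is correct and follows essentially the same route as the paper's own proof: both use the refined decay of $\B$ from Proposition \ref{lm pt} to choose an inner scale $\lambda$ (your $\delta$) at which $|x||\B(x)|$ drops below $C_1^*$, rescale so that Proposition \ref{prop2} (equivalently Lemmas \ref{lm u1}--\ref{lm u2}) applies with balanced small constants, exploit the $(-1)$-homogeneity of $\U^b$ and the invariance of $b$ under the dilation, and finish on the annulus $B_1\setminus B_\lambda$ with the crude pointwise bounds. Your explicit verification that the vanishing condition \eqref{vanishing} is inherited by the rescaled solution is a point the paper leaves implicit, but it does not change the argument.
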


\begin{proof}
First, we prove that the estimate \eqref{lemma3.9-1} holds in a small ball $B_\lambda$. 
According to Proposition \ref{lm pt}, for any fixed $q\in (\frac{3}{2},3)$, it holds that 
\[
|\B(x)| \le \frac{CC_2^*}{|x|^{\al}}, \ \ \ \, x \in B_1\setminus \{0\},
\]
provided $C_1^* \le \ve(q)$.
Here $\al=\frac{3}{q}-1$. Let $\lambda$ be the number such that  $ (2\lm)^{1-\al}=\frac{C_1^*}{CC_2^*}$. Thus it follows that
\[
|\B(x)| \le \frac{C_1^*}{|x|}, \ \ \ \, x\in B_{2\lm}\setminus \{0\}.
\] 
Define 
\begin{equation*}
    (\uu_{\lm}(x),\B_{\lm}(x), p_\lm(x)): =(\lm \uu(\lm x),\lm \B(\lm x), \lm^2 p(\lm x)), \ \ x\in B_2\setminus \{0\}. 
\end{equation*}
Due to the scaling invariant property of the MHD equations,  $(\uu_{\lm},\B_{\lm})$ solves the MHD equations in $B_2\setminus\{0\}$ and satisfies
\[
|\uu_{\lm}(x)| \le \frac{C_1^*}{ |x| }, \ \ 
|\B_{\lm}(x)| \le \frac{C_1^*}{ |x| },\quad x\in B_2\setminus\{0\}.
\]
Note that 
\begin{align}
\label{eqn8}
    \int_{|x|=1} \bT_1(\uu_\lm, \B_\lm, p_\lm)_{ij}n_j = \int_{|x|= \lambda} \bT_1(\uu, \B, p)_{ij}n_j = b_i. 
\end{align}
According to the proof above, it holds that $|b|\lesssim C_1^*$. Now it follows from Lemmas \ref{lm u1}-\ref{lm u2} that 
\begin{equation*}\label{Lemma3.9-5}
 \|\uu_{\lm}- \U^{b}\|_{W^{1,q}(B_1)} + \sup_{x\in B_1} |x|^{\al}|(\uu_{\lm}-\U^{ b})(x)| \le CC_1^*.   
\end{equation*}
Equivalently, it holds that 
\begin{equation*}
 \lm^{-\al}
\|  \uu- \U^b \|_{L^{q}(B_{\lm})} +\lm^{1-\al} \left( \| \nb( \uu- \U^b) \|_{L^{q}(B_{\lm})} 
+ \sup_{x\in B_{\lm}} |x|^{\al}\big|(\uu - \U^b)(x)\big| \right) \le CC_1^*.   
\end{equation*}
Hence, 
\begin{equation*}
  \|\uu- \U^b\|_{W^{1,q}(B_{\lm})} + 
\sup_{x\in B_{\lm}} |x|^{\frac{3}{q}-1}|(\uu- \U^b)(x)| 
\le \tilde{C}(C_1^*, C_2^*). 
\end{equation*}


Next, we show that the required estimate \eqref{lemma3.9-1} holds in $B_1\setminus B_{\lm}$.
Equations \eqref{est ub}, \eqref{eqn1} and \eqref{eqn8} imply that
\[
|(\uu-\U^b)(x)|\lesssim \frac{C_1^*}{|x|}, \quad
|\nb(\uu-\U^b)(x)|\lesssim \frac{C_2^*}{|x|^2}, \ \ \  \forall x \in B_1 \setminus B_{\lm}
\]
Therefore, we conclude
\[
\|\uu- \U^b\|_{W^{1,q}(B_1\setminus B_\lambda)} + \sup_{x \in B_1\setminus B_\lambda} |x|^{\frac{3}{q} - 1} |(\uu - \U^b)(x)| \lesssim C_2^*,
\]
which completes the proof.
\end{proof}

\section{Proof of Corollaries \ref{cor1} and \ref{cor2}}

In this section, we will prove Corollaries \ref{cor1} and \ref{cor2}. In fact, it suffices to verify the vanishing condition \eqref{vanishing}, after which the corollaries follow from Theorem \ref{thm1}.

\begin{proof}[Proof of Corollary \ref{cor1}]
Using the definition of $\bT_2$ in \eqref{def T}, we will show that \begin{align*}
    \int_{\p B_{1}} 
    -\p_j B^i n_j  + u^j B^in_j  -B^j u^i n_j=0, \quad i=1,2,3. 
\end{align*}
Given the axisymmetric magnetic field $\B= B^\theta (r, z) \e_{\te}  =(-B^{\te}\sin \te, B^{\te} \cos \te, 0)$, direct calculation yields
\[
(\p_j \B) n_j =
\left(-\frac{\p B^\te}{\p \rho}\sin \te, \frac{\p B^\te}{\p \rho}\cos \te,0\right).
\]
Here $\rho=|x|$, and  $\te$ is the azimuthal angle.
Hence, for $i=1$, we have 
\begin{align}
\label{eqn5}
    \int_{\p B_{1}} -\p_j B^1 n_j
    =
    \int_0^{2\pi} \sin \te \ \mathrm{d} \te
    \int_0^{\pi} \frac{\p B^\te}{\p \rho} \ \mathrm{d} \vp=0.
\end{align}
Similarly, we have $\int_{\partial B_1} -\partial_j B^2 n_j = 0$. A direct calculation gives $u^j n_j=u^r \sin \vp + u^z \cos \vp$. Due to the same reason as that for \eqref{eqn5}, we have
\begin{align*}
   \int_{\p B_{1}}
    u^j B^i n_j = 
    \int_{\p B_{1}}
    (u^r \sin \vp+ u^z \cos \vp)B^i =0, \quad i=1,2,3.
\end{align*}
For the last term,  since $\B$ is tangential to $\p B_{1}$,
$B^j u^i n_j=(\B \cdot \Bn)u^i=0$, $i=1,2,3$. Then the  integral $\int_{\partial B_1} B^j u^i n_j$ vanishes.
Therefore,  \eqref{vanishing} holds when $(\uu,\B)$ is axisymmetric and $\B=B^{\te}(r,z)\e_{\te}$.
\end{proof}

\begin{proof}[Proof of Corollary \ref{cor2}]
Analogously to the proof of Corollary \ref{cor1}, we show that equation \eqref{vanishing} is valid under the boundary condition \eqref{boundarycondition}. 
Direct calculation gives
\[
(\bT_2)_{ij} n_j=-\frac{\p B^i}{\p \Bn}+(\uu\cdot \Bn)B^i-(\B \cdot \Bn)u^i, \quad i=1,2,3.
\]
By virtue of the divergence theorem and \eqref{boundarycondition}, we derive
\begin{align} \label{cor2integral}
\int_{|x|=1} (\bT_2)_{ij} n_j = \int_{|x|=2}  (\bT_2)_{ij} n_j
=\int_{|x|=2}-\frac{\p B^i}{\p \Bn},\ \ \  i=1,2,3.
\end{align}

Since $\B$ is axisymmetric, it can be expressed in spherical coordinates as
\[
\B=B^{\rho}(\rho,\vp) \e_{\rho}
+B^{\vp}(\rho,\vp) \e_{\vp}
+B^{\te}(\rho,\vp) \e_{\te},
\]
where $\e_{\rho},\e_{\vp}$ and $\e_{\te}$ are defined in \eqref{sphericalbases}.
Hence, for $i=1$, we have
\begin{align*}
    \int_{|x|=2} - \frac{\partial B^1}{\partial \Bn} = 
    \int_{|x|=2} - \frac{\partial B^\rho}{\partial \rho} \sin\varphi \cos \theta - \frac{\partial B^\varphi}{\partial \rho} \cos \varphi \cos \theta + \frac{\partial B^\theta}{\partial \rho} \sin \theta = 0. 
\end{align*}
Due to a similar argument, we also have $\int_{|x|=2} - \frac{\partial B^2}{\partial \Bn} =0$. 
 For the case $i=3$, we will show 
\[
\int_{|x|=2} - \frac{\partial B^3}{\partial \Bn} = \int_{|x|=2} 
-\frac{\p B^\rho}{\p \rho} \cos \vp
+\frac{\p B^\vp}{\p \rho } \sin \vp
=0.
\]
The boundary condition $\curl \ \B \times \Bn=0$ on $\p B_2$ and \eqref{sphericalcurl} yield the coefficient of  $e_\te$ vanishes, i.e., 
\[
\frac{\p(\rho B^{\vp})}{\p\rho}=\frac{\p B^{\rho}}{\p\vp}\ \ \ \mbox{on}\ \partial B_2.
\]
Combining this with $B^{\rho}=\B\cdot \Bn=0$ on $\p B_{2}$, we have
\[
\frac{\p B^{\vp}}{\p \rho} =-\frac{1}{2} B^{\vp}\quad \text{on }\ \p B_2.
\]
Similarly, $\div\ \B=0$ and \eqref{sphericaldiv} imply
\[
\frac{\p B^{\rho}}{\p \rho}+ \frac{1}{2 \sin \vp} (\cos \vp B^\vp + \sin \vp \frac{\p B^{\vp}}{\p \vp})=0,
\]
where we used the fact $B^\rho=0$ on $\p B_2$. Then
\begin{align*}
&\int_{|x|=2} 
-\frac{\p B^\rho}{\p \rho} \cos \vp
+\frac{\p B^\vp}{\p \rho } \sin \vp\\
=&\int_{|x|=2} 
\frac{\cos \vp}{2 \sin \vp}\left(\cos \vp B^\vp + \sin \vp \frac{\p B^{\vp}}{\p \vp}\right) 
-\frac12 B^\vp \sin\vp\\
=& 4\pi \int_0^{\pi} 
\left( B^{\vp} (\cos^2\vp-\sin^2\vp)+\frac{\p B^{\vp}}{\p \vp} \sin \vp \cos \vp
\right) \mathrm{d}\vp \\
=& 4\pi \int_0^{\pi} 
\left( B^{\vp} (\cos^2\vp-\sin^2\vp)
-B^{\vp} (\cos^2\vp-\sin^2\vp) \right) \mathrm{d} \vp=0.
\end{align*}
Hence, the boundary condition \eqref{boundarycondition} also leads to \eqref{vanishing}, which completes the proof.
\end{proof}

\section{Proof of Corollary \ref{cor3}}

In this section, we will prove Corollary \ref{cor3}. The proof follows the same strategy as that of Corollary 1.5 in \cite{MT12}, and we sketch the proof here.

\begin{proof}[Proof of Corollary \ref{cor3}]
Let $\ve=\ve(2)$ be the constant given in Theorem \ref{thm1}, and assume $C_1^*\le  \ve$.  
For every $\lm>0$, define $(\uu_{\lm}(x),\B_{\lm}(x)) =(\lm \uu(\lm x),\lm \B(\lm x))$. It is straightforward to check that $(\uu_{\lm},\B_{\lm})$ is also a solution to \eqref{MHD} and satisfies assumptions \eqref{est ub} and \eqref{vanishing}. It follows from Theorem \ref{thm1} that 
\begin{equation}\label{lambdaestimate}
 |\uu_{\lm}(x) - \U^b(x)| + |\B_{\lm}(x)|
   \le  \frac{C}{|x|^{\frac12}}, \ \ \ x\in B_1 \setminus \{0\}.
\end{equation}
Note that the vector $b$ and the constant $C$ in the above inequality are independent of $\lambda$. \eqref{lambdaestimate} implies that 
\[ |\uu(y)- \U^b(y)| + |\B(y)|
 \le  \frac{C}{ \lm^{\frac12} |y|^{\frac12}}, \ 
\ \ y\in  B_{\lm} \setminus \{0\}.
\]
For any fixed $y \neq 0$, taking the limit $\lambda \to \infty$ yields $\uu(y)- \U^b(y) = \B(y) = 0$. This completes the proof.
\end{proof}

\section*{Acknowledgments}
The research of Zhang is supported by the Postgraduate Research \& Practice Innovation Program of Jiangsu Province via grant KYCX24\_3285. The research of Yun Wang is partially supported by NSFC grants 12171349, 12271389 and the Natural Science Foundation of Jiangsu Province (Grant No. BK20240147).

\bibliographystyle{plain}
\bibliography{ref}

\end{document}